\documentclass[a4paper,reqno,11pt]{amsart}

%----------------package-------------%
\usepackage{graphicx}
\usepackage{amsmath}
\usepackage{amsthm}
\usepackage{amssymb}
\usepackage{mathtools}
\usepackage{enumitem}
\usepackage{tikz-cd}
\usepackage[colorlinks]{hyperref}

%----------------setting------------%
\numberwithin{equation}{section}
\setlist[1]{leftmargin=*,label={\rm(\arabic*)}}

%---------------theorem-------------%
\theoremstyle{plain}

\newtheorem{thm}{Theorem}[section]
\newtheorem{prop}[thm]{Proposition}
\newtheorem{cor}[thm]{Corollary}
\newtheorem{lem}[thm]{Lemma}
\theoremstyle{definition}

\theoremstyle{remark}
\newtheorem{rmk}[thm]{Remark}

%------------number field------------%
\newcommand{\bbN}{\mathbb{N}}
\newcommand{\bbZ}{\mathbb{Z}}

\newcommand{\emptycomment}[1]{} %to remove paragraphs

%-------------math-operator-----------%

\renewcommand{\Im}{\operatorname{Im}}
\renewcommand{\top}{\operatorname{top}}
\DeclareMathOperator{\rad}{rad}
\DeclareMathOperator{\soc}{soc}

\DeclareMathOperator{\corank}{corank}
\renewcommand{\mod}{\operatorname{mod}}

%-----------general-notation---------%

\newcommand{\lto}{\longrightarrow}
%\mathchardef\mhyphen="2D

%\renewcommand{\-}{\mbox{-}}

%-------------black------------%

%-----------calligraphy---------%

\newcommand{\F}{\mathcal{F}}

\newcommand{\Q}{\mathcal{Q}}
\newcommand{\R}{\mathcal{R}}

\begin{document}
\title{The magnitude  for  Nakayama algebras}
\author{Dawei Shen, Yaru Wu}
\address{School of mathematics and statistics \\
         Henan University \\
         Kaifeng, Henan 475004 \\
         P. R. China}
\email{sdw12345@mail.ustc.edu.cn}
\address{School of mathematics and statistics \\
         Henan University \\
         Kaifeng, Henan 475004 \\
         P. R. China}
\email{wyr39@qq.com}
\subjclass[2010]{Primary 16E05; Secondary 15A15}
\keywords{Nakayama algebra, Cartan matrix, magnitude}
%\dedicatory{}
%\commby{}
\date{\today}
\begin{abstract}
The magnitude for algebras is a generalization of the Euler characteristic.
We investigate the  magnitude for  Nakayama algebras.
Using Ringel's resolution quiver, the existence and the value of rational magnitude is given.
As a result, we show that the finite global dimension criteria 
for Nakayama algebras of Madsen and the first author are equivalent.
\end{abstract}
\maketitle

\section{Introduction}
Let $R$ be a commutative ring.
Given a positive integer $n$, we write $u\in R^n$
for the column vector with $u_i=1$ for
all $1\leq i\leq n$.
Let $C$ be an $n\times n$ matrix in $R$.
A weighting on $C$ is
a column vector $\alpha$
such that $C\alpha = u$.
A coweighting on $C$ is a row vector
$\beta$ such that $\beta C= u^*$, where $u^*$ is the transpose of $u$.

Following \cite{Lei12}, a matrix $C$ is said to have magnitude if it admits at least
one weighting $\alpha$ and at least one coweighting $\beta$.
The magnitude of $C$ is given by
\[m(C)=\sum\nolimits_i\alpha_i=\sum\nolimits_i\beta_i.\]

A matrix has magnitude if and only its transpose has magnitude.
If a matrix $C$ has magnitude,
then the magnitude of $C$ is independent of the weighting and the coweighting chosen.
If $C$  is an invertible matrix, then the weighting and  coweighting are unique 
and the magnitude of $C$ is the sum of all entries in the inverse matrix of $C$.

Let  $A$ be a finite dimensional algebra over a field $k$.
If $A$ has finite global dimension, 
the Cartan matrix $C_{A}$ of $A$ is invertible in the ring of integers.
The magnitude of $C_A$ is equal to the Euler characteristic of $A$.
If $A$ has infinite global dimension, the Euler characteristic is an infinite sum
while the magnitude may exist  in the field of rational numbers.

In the present paper, we study the magnitude of Cartan matrix for any Nakayama algebra.
Using Ringel's resolution quiver, we prove the existence and give the value of magnitude 
in  the field of rational numbers.
We describe syzygy of simple modules and give a special coweighting.
As a result, we show that the finite global dimension criteria 
for Nakayama algebras of Madsen and the first author are equivalent.
Also, we compute the graded Cartan determinant of any graded Nakayama algebra.

Through this paper, $k$ is a fixed field.
All modules are finite dimensional right modules.

\section{Magnitude of a Nakayama algebra}
Let $n$ be a positive integer and  $\Delta_n$ be the following quiver
\[\begin{tikzcd}
1\ar[r,"x_1"] & 2\ar[r,"x_2"]  &\cdots \ar[r,"x_{n-1}"] &
n \ar[lll,bend left=1em,"x_n"].
\end{tikzcd}\]

An algebra $A$ over a field $k$ is called a Nakayama algebra of order $n$
if $A$ is isomorphic to the quotient algebra of $k\Delta_n$ by an ideal
generated by a finite set $\R$ of nontrivial  paths.
If $\R$ consists of paths of length  $\geq 2$,
then $A$ is  a cyclic Nakayama algebra.
If there is only one arrow in $\R$,
then $A$ is  a linear Nakayama algebra.
We also allow more than  one arrow in $\R$.

Let $A$ be a Nakayama algebra of order $n$.
For any $1\leq i\leq n$, let $e_a$ be the idempotent, 
$S_a$ the simple module, $P_a$ the projective module, 
and $I_a$ the injective module at $a$.
Define $\tau(a)=a+1$ for any $1\leq a\leq n-1$ and $\tau(n)=1$.
Denote by $p_a$ the dimension of  $P_a$.
Then $p_a\leq p_{\tau(a)}+1$ for any $a$.
The sequence $(p_1,p_2,\cdots, p_n)$ is called an admissible sequence of $A$.

The resolution quiver $R_A$ is introduced by Ringel in \cite{Rin13}.
The vertices are $S_1,S_2,\cdots,S_n$  and there is
a unique arrow from  $S$ to $\gamma(S)=\tau\soc P(S)$ for each simple module $S$.
The resolution quiver of $A$ can be realized the functional graph of Gustafson function
\[\gamma(a)=\tau^{p(a)}(a)\]
for any $1\leq a\leq n$; see \cite{Gus85}.
Let $q_a$ be the dimension of  $I_a$.
The function
\[\psi(a)=\tau^{-q(a)}(a)\]
gives rise to the coresolution quiver $R^*_{A}$ of $A$.

A vertex in the resolution quiver is called cyclic if it lies on a cycle.
Given a cycle  $I$ in $R_A$,
the periodicity $p$ of $I$ is the number of vertices in $I$.
The weight $w$ of $I$ is $\sum_ap_a/n$
where $a$ runs through all vertices in $I$.
It turns out that $p$ and $w$ are relatively prime integers.
Moreover, $p$ and $w$ are independent of the cycle chosen; see \cite{She14}.

Let $\mathbf{d}$ be a grading on $A$, i.e. $\mathbf{d}$ is
a map from the set of arrows to $\bbZ$.
For any $\ell$, we write $A_\ell$ for the degree $\ell$ part of $A$.
The graded Cartan matrix $C_{A}^\mathbf{d}(t)$  is a matrix in the ring $\bbZ[t^{\pm 1}]$
of Laurent polynomials over $\bbZ$ and
\[c_{ab}(t)=\sum\nolimits_{\ell}\dim_k(e_bA_\ell e_a)t^\ell\]
for all $1\leq a,b \leq n$.
If $\mathbf{d}$ is zero, then $C_{A}^\mathbf{d}(t)$ is the Cartan matrix $C_A$.

Given  a subset $I$ of $\{1,2,\cdots, n\}$,
we write $\epsilon_I\in \mathbb{Z}^n$ for the characteristic vector of $I$.
Here, $\epsilon_{I,a}=1$ if $a\in I$ and $\epsilon_{I,a}=0$ otherwise.

\begin{lem} \label{lem:weighting}
Let  $C_A$ be the Cartan matrix of  $A$.
\begin{enumerate}
    \item If $I$ be a cycle in $R_A$, then $\epsilon_I/w$ is a weighting on $C_A$;
    \item If $\alpha$ is a weighting on $C_A$ and $a$ is not cyclic, then $\alpha_a=0$;
    \item If $\alpha$ is a weighting on $C_A$ and $a,b$ lie on the same cycle, then $\alpha_a=\alpha_b$.
\end{enumerate}
\end{lem}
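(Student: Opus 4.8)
The plan is to reduce all three statements to a single matrix identity relating the row-differences of $C_A$ to the Gustafson function $\gamma$. Identify the vertex set with $\bbZ/n\bbZ$ so that $\tau$ becomes ``$+1$'', and recall that $P_a$ is uniserial with composition factors $S_a,S_{\tau(a)},\dots,S_{\tau^{p_a-1}(a)}$; hence the $(b,a)$-entry of $C_A$ equals the number of integers $j$ with $0\le j<p_a$ and $a+j\equiv b \pmod n$, i.e. the number of times the ``arc'' $[a,a+p_a)$ of length $p_a$ covers the residue $b$. First I would prove the identity
\[(I-S)\,C_A = I-G,\]
where $S$ is the cyclic shift sending row $b-1$ to row $b$ and $G$ is the $0/1$ matrix of $\gamma$, so that $G_{ba}=1$ exactly when $\gamma(a)=b$. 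The point is a telescoping, or equivalently the one-to-one correspondence $x\mapsto x+1$ between the points of the arc congruent to $b-1$ and those congruent to $b$, which shows that the coverage of $b$ minus the coverage of $b-1$ equals $[a\equiv b]-[a+p_a\equiv b]$ (Iverson brackets, congruences mod $n$); since $\gamma(a)=\tau^{p_a}(a)=a+p_a$, this is precisely $I_{ba}-G_{ba}$. Establishing this cleanly, with the correct handling of the multiplicities that arise when $p_a\ge n$, is the main technical step; everything else is bookkeeping on the functional graph $R_A$.

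For (1), I would first check that $G\epsilon_I=\epsilon_I$ for a cycle $I$: the $\gamma$-preimages of a vertex $b\in I$ consist of its cycle-predecessor (which lies in $I$) together with non-cyclic vertices (which lie outside $I$), while no vertex of $I$ maps outside $I$; reading this off the columns gives $\sum_{\gamma(a)=b}\epsilon_{I,a}=\epsilon_{I,b}$. Combined with the identity, $(I-S)C_A\epsilon_I=(I-G)\epsilon_I=0$, so $C_A\epsilon_I$ is fixed by the cyclic shift and is therefore a constant multiple of $u$. To pin down the constant I would sum all coordinates: $\sum_b (C_A\epsilon_I)_b=\sum_{a\in I}p_a=wn$, so the constant is $w$ and $\epsilon_I/w$ is a weighting. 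Alternatively the value $w$ drops out at once from the observation that the arcs $[c_i,c_{i+1})$ along the cycle tile an interval of length $wn$ and hence cover each residue exactly $w$ times.

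For (2) and (3) I would run the identity in the other direction. Given a weighting $\alpha$, apply $I-S$ to $C_A\alpha=u$; since $Su=u$ we get $(I-S)u=0$, whence $(I-G)\alpha=0$, i.e. $\alpha=G\alpha$. Spelled out, this is the flow condition
\[\alpha_b=\sum\nolimits_{\gamma(a)=b}\alpha_a \qquad\text{for every }b.\]
For (2), note that a non-cyclic vertex has only non-cyclic $\gamma$-preimages, since a cyclic vertex maps to a cyclic one; thus the non-cyclic vertices form a forest on which $\gamma$ points toward the cycles. Processing these vertices in topological order from the sources inward, a source has empty preimage and hence $\alpha=0$, and the flow condition propagates $\alpha=0$ to every non-cyclic vertex. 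For (3), let $c_0\to c_1\to\cdots\to c_{p-1}\to c_0$ be the cycle through $a$ and $b$. The $\gamma$-preimages of $c_{i+1}$ are $c_i$ together with possibly some non-cyclic vertices; the latter contribute $0$ by (2), so the flow condition collapses to $\alpha_{c_i}=\alpha_{c_{i+1}}$. Hence $\alpha$ is constant along the cycle, giving $\alpha_a=\alpha_b$.

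The only genuinely delicate point is the first identity, and within it the multiplicity bookkeeping when a projective is longer than $n$; once that is in hand, parts (1)--(3) are immediate consequences of $(I-S)C_A=I-G$ together with elementary facts about the functional graph $R_A$.
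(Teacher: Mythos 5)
Your proof is correct, and for parts (2) and (3) it takes a genuinely different route from the paper. The paper proves (1) exactly as in your second paragraph's closing remark: summing the dimension vectors of the projectives along a cycle gives $w\cdot(1,\dots,1)$, hence $C_A\epsilon_I=w\,u$. For (2) and (3), however, the paper does not argue combinatorially at all: it invokes the external result \cite[4.1]{She17} that the number of cycles in $R_A$ equals $\corank C_A+1$, and then observes that the $c$ weightings $\epsilon_I/w$ (which have pairwise disjoint supports) must affinely span the entire solution set of $C_A\alpha=u$, an affine space of dimension $\corank C_A$; every weighting is therefore an affine combination of the $\epsilon_I/w$, which vanishes off the cycles and is constant on each cycle. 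Your identity $(I-S)C_A=I-G$ replaces that dimension count by the flow condition $\alpha=G\alpha$, from which (2) and (3) follow by induction along the trees of the functional graph and around each cycle. What your approach buys is self-containment: it avoids the citation and simultaneously yields (1), since $G\epsilon_I=\epsilon_I$ forces $C_A\epsilon_I$ to be shift-invariant, hence constant, and the constant is pinned down by the total dimension count $\sum_{a\in I}p_a=wn$. What the paper's approach buys is brevity, given that the corank formula is already in the literature (and indeed your identity is essentially the computation underlying that formula). Your multiplicity bookkeeping for $p_a\ge n$ is handled correctly by counting lattice points of the arc $[a,a+p_a)$ in each residue class, so the "delicate point" you flag is sound.
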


\begin{proof}
(1) Denote by $\mathbf{dim}\,  M$ the dimension vector of a module $M$.
Let $a_1,a_2,\cdots,a_p$ be the vertices in $I$.
Since $I$ is a cycle in $R_A$,
then we have
\[\mathbf{dim}\,  P_{a_1}+\mathbf{dim}\,  P_{a_2}+\cdots +\mathbf{dim}\, P_{a_p}=w\cdot (1,1,\cdots,1).\]
It follows that $C_A \epsilon_I =w\cdot u$.

(2\&3) By \cite[4.1]{She17}, the number of cycles in $R_A$ is equal to $\corank C_A+1$.
Any weighting on $C_A$ is a linear combination of vector $\epsilon_I/w$, where $I$ is a cycle.
Then  the statement holds.
\end{proof}

Similarly, one can show that $C_A$ has a coweighting  in $\mathbb{Q}$.
\begin{thm} \label{thm:mag}
Let $A$ be a Nakayama algebra and $C_A$ be the Cartan matrix of $A$.
Then $C_A$ has  magnitude in  $\mathbb{Q}$.
The magnitude of $C_A$ is
\[m(C_A)=p/w\]
where $p$ is the periodicity and $w$ is the weight of $R_A$.
\end{thm}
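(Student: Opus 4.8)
The plan is to extract a weighting and a coweighting directly from the resolution data and then to compute the magnitude from the weighting side, where the answer is transparent. For the weighting, I would invoke Lemma~\ref{lem:weighting}(1): fixing any cycle $I$ of the resolution quiver $R_A$, the rational vector $\epsilon_I/w$ satisfies $C_A(\epsilon_I/w)=u$, so it is a weighting in $\mathbb{Q}^n$. For the coweighting, the remark preceding the statement already guarantees one in $\mathbb{Q}$; concretely it is produced by the injective dual of Lemma~\ref{lem:weighting}(1), replacing the Gustafson function $\gamma$ by the coresolution function $\psi(a)=\tau^{-q(a)}(a)$, projective modules by injective modules, and columns by rows. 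Equivalently, one may apply Lemma~\ref{lem:weighting}(1) to the opposite algebra $A^{\op}$, whose Cartan matrix is the transpose of $C_A$, and transpose the resulting weighting. In either form, once a weighting and a coweighting both exist, $C_A$ has magnitude by definition, and its value does not depend on which weighting or coweighting is used, as recalled in the introduction.

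To evaluate the magnitude I would use the explicit weighting $\alpha=\epsilon_I/w$ from Lemma~\ref{lem:weighting}(1) and read off
\[m(C_A)=\sum\nolimits_a\alpha_a=\frac{1}{w}\sum\nolimits_a\epsilon_{I,a}=\frac{p}{w},\]
since the coordinate sum of the characteristic vector $\epsilon_I$ is exactly the number of vertices of the cycle $I$, namely the periodicity $p$. The output is well defined because $p$ and $w$ are independent of the chosen cycle of $R_A$ (recalled before Lemma~\ref{lem:weighting}, see \cite{She14}); picking a different cycle yields a different weighting but the same coordinate sum $p/w$, in agreement with the independence of the magnitude from the weighting.

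Essentially all of the substance sits in Lemma~\ref{lem:weighting} and its coweighting dual, so the theorem is close to a corollary once those are in hand, and I expect the only point requiring care to be a consistency check rather than a genuine obstacle. Computing the magnitude instead from a coweighting $\epsilon_J/w^{*}$ coming from a cycle $J$ of the coresolution quiver $R^*_A$ would a priori give $|J|/w^{*}$, the periodicity-to-weight ratio of $R^*_A$; the theorem forces this to equal $p/w$, but I would not prove that equality by hand. Rather I would note that it is automatic from the fact that a matrix with magnitude has a single, weighting-independent magnitude, which lets me compute everything on the cleaner weighting side and avoid analysing the coresolution quiver directly.
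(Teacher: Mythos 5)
Your proposal is correct and follows exactly the paper's (largely implicit) argument: the weighting is $\epsilon_I/w$ from Lemma~\ref{lem:weighting}(1), the coweighting comes from the dual statement for the coresolution quiver (equivalently, the opposite algebra), and the value $p/w$ is read off by summing the coordinates of $\epsilon_I/w$. Your closing observation that the weighting-independence of the magnitude yields $p(R_A)/w(R_A)=p(R^*_A)/w(R^*_A)$ for free is precisely the point of the remark following the theorem.
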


\begin{rmk}
Since  $p,w$ are relatively prime,
it follows that
\[p(R_A)=p(R^*_{A}),\quad w(R_A)=w(R^*_{A}).\]
This is proven in \cite{She15} by different means.
\end{rmk}

Let $\mathbf{d}$ be a positive grading on $A$, i.e., $\mathbf{d}(x_i)$ is positive for  $1\leq i\leq n$.
Since $C^\mathbf{d}_{A}(0)$ is the identity matrix, 
the graded Cartan matrix $C^\mathbf{d}_{A}(t)$ is invertible 
in the field $\mathbb{Q}(t)$ of rational fractions over $\Q$.

Denote by $\alpha^\mathbf{d}(t)=(\alpha^\mathbf{d}_{a}(t))$
the unique weighting on  $C^\mathbf{d}_{A}(t)$.
Then $\alpha^\mathbf{d}_{a}(t)$ is  related to the minimal graded injective resolution of
the simple module $S_a$.
For a graded module $M$, denote by $M(a)$ the $a$-th shift of $M$.
Let
\[\xi\colon 0\to S_a \to I_{c_0}(b_0)\to I_{c_1}(b_1) \to \cdots\]
be a  minimal graded injective resolution of $S_a$.
Then we have
\begin{equation}\label{series}
\alpha^\mathbf{d}_{a}(t)=\sum\nolimits_i(-1)^i t^{b_i}.
\end{equation}

\begin{prop}
Let $A$ be a Nakayama algebra  with positive grading $\mathbf{d}$.
\begin{enumerate}
    \item If $\mathrm{id}\,S_a$ is odd, then $\alpha^\mathbf{d}_{a}(1)=0$;
    \item If $\mathrm{id}\,S_a$ is even, then $\alpha^\mathbf{d}_{a}(1)=1$;
    \item If $\mathrm{id}\,S_a$ is infinite, then $0<\alpha^\mathbf{d}_{a}(1)<1$;
    \item If $a$ is not cyclic, then $\alpha^\mathbf{d}_{a}(1)=0$;
    \item If $a,b$ lie on the same cycle, then $\alpha^\mathbf{d}_a(1)=\alpha^\mathbf{d}_b(1)$.
\end{enumerate}
\end{prop}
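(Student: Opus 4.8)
The plan is to split the argument according to the three regimes of $\mathrm{id}\,S_a$ for items (1)--(3), all read off from the series \eqref{series}, and then to obtain (4) and (5) at one stroke by specialising the weighting at $t=1$. Recall from \eqref{series} that $\alpha^\mathbf{d}_a(t)=\sum_i(-1)^it^{b_i}$, where the $b_i$ are the shifts in the minimal graded injective resolution $\xi$ of $S_a$. Items (1) and (2) are then immediate: if $\mathrm{id}\,S_a=d<\infty$, then $\xi$ has exactly $d+1$ injective terms, so \eqref{series} is the polynomial $\sum_{i=0}^{d}(-1)^it^{b_i}$ and
\[\alpha^\mathbf{d}_a(1)=\sum_{i=0}^{d}(-1)^i=\begin{cases}1,&d\ \text{even},\\0,&d\ \text{odd}.\end{cases}\]

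For (3) I would exploit periodicity. The cosyzygies $\Omega^{-i}S_a$ over a Nakayama algebra are uniserial and determined, up to a grading shift, by their socle vertex together with their length; the socle vertices move along the coresolution quiver $R^*_A$ via $\psi$. As there are only finitely many such (socle, length) data, the sequence of injective terms of $\xi$ is eventually periodic, and $\mathrm{id}\,S_a=\infty$ means it never terminates. One turn around a cycle of $R^*_A$ accounts for an even number $P$ of terms and raises the shift by a fixed positive amount $N$, so $b_{i+P}=b_i+N$ for all large $i$; since $\mathbf{d}$ is positive the shifts strictly increase, $0=b_0<b_1<b_2<\cdots$. Multiplying \eqref{series} by $1-t^N$ telescopes the tail into a polynomial $f$, giving $\alpha^\mathbf{d}_a(t)=f(t)/(1-t^N)$ with $f(1)=0$. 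Differentiating numerator and denominator (l'H\^opital) yields
\[\alpha^\mathbf{d}_a(1)=\frac{f'(1)}{-N},\]
and after telescoping the alternating sum of the $b_i$ over one period this equals the sum of the degree gaps $b_i-b_{i-1}$ lying in one parity class of the period, divided by $N$, the sum of all the gaps in the period. Every gap is positive and the numerator omits a nonempty complementary set of positive gaps, so $0<\alpha^\mathbf{d}_a(1)<1$.

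Items (4) and (5) then follow formally. By (1)--(3) the value $\alpha^\mathbf{d}_a(1)$ is finite for every $a$, so each coordinate of $\alpha^\mathbf{d}(t)$ is a rational function regular at $t=1$ and the vector $\alpha^\mathbf{d}(1)=\lim_{t\to1}\alpha^\mathbf{d}(t)$ exists. Setting $t=1$ in $c_{ab}(t)$ recovers the ungraded Cartan number, so $C^\mathbf{d}_A(1)=C_A$; passing to the limit in $C^\mathbf{d}_A(t)\alpha^\mathbf{d}(t)=u$ gives $C_A\,\alpha^\mathbf{d}(1)=u$. Hence $\alpha^\mathbf{d}(1)$ is a weighting on $C_A$, and (4), (5) are precisely Lemma \ref{lem:weighting}(2) and (3). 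As a bonus this reconciles (3) with (4): a simple of infinite injective dimension must be cyclic, in harmony with the Remark identifying the periodicities and weights of $R_A$ and $R^*_A$.

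The step I expect to be the real obstacle is the structural input underlying (3): establishing that the graded injective resolution is eventually periodic with even period $P$ and strictly increasing shifts. This rests on the uniserial description of cosyzygies over a Nakayama algebra and on tracking their socles along $R^*_A$ through $\psi$, with the positivity of $\mathbf{d}$ supplying the strict inequalities $b_{i-1}<b_i$ that make the final ratio lie strictly between $0$ and $1$. Once this is secured, the l'H\^opital computation and the limit argument for (4)--(5) are routine.
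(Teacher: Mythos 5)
Your proposal is correct and follows essentially the same route as the paper: items (1) and (2) read off directly from the alternating series \eqref{series}, item (3) by splitting off the eventually periodic tail of the minimal graded injective resolution and evaluating the resulting rational function $g(t)/(1-t^{N})$ at $t=1$ (the paper asserts the result is a proper fraction where you make the l'H\^opital computation explicit), and items (4) and (5) by observing that $C^{\mathbf{d}}_A(1)=C_A$, so $\alpha^{\mathbf{d}}(1)$ is a weighting on $C_A$ and Lemma \ref{lem:weighting} applies. The only difference is one of explicitness, not of substance.
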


\begin{proof}
(1\&2) This follows directly from equation \eqref{series}.

(3) Suppose $\mathrm{id}\,S_a$ is infinite.
Since $A$ is of finite representation type, then $\xi$ is periodic after finite steps.
By \eqref{series}, there are  $f(t)$ and $g(t)$  such that
\begin{eqnarray*}
    &\alpha^\mathbf{d}_{a}(t)=f(t)+g(t)/(1-t^{b_{2r+2s}-b_{2r}}),\\
    &f(t)=t^{b_0}-x^{b_1}+\cdots+ t^{b_{2r-2}}-t^{b_{2r-1}},\\
    &g(t)=t^{b_{2r}}-t^{b_{2r+1}}+\cdots+ t^{b_{2r+2s-2}}-t^{b_{2r+2s-1}}.
\end{eqnarray*}
Since $\{b_i\}$ is strictly increasing,
then $\alpha^\mathbf{d}_{a}(1)$ is a proper fraction.

(4\&5) By (1-3) we know that $\alpha^\mathbf{d}(1)$ is a weighting on $C_A$.
Then the statements follow from Lemma \ref{lem:weighting}.
\end{proof}

Let $\mathbf{d}$ be the length grading, i.e., $\mathbf{d}({x_a})=1$ for any $1\leq a\leq n$.
Then $\alpha_a(t)=\alpha^\mathbf{d}_{a}(t)$ is determined 
by the dimension of cosyzygy  of the simple module $S_a$.
If $a$ is not cyclic, then  $\alpha_{a}(1)=0$.
It remains to calculate $\alpha_{a}(1)$ for any cyclic vertex $a$ in the resolution quiver.

\begin{lem}[{\cite[3.7]{Mad05}}]
Let  $a$ be a vertex in $R_A$.
Then $a$ is cyclic  if and only if $\mathrm{id}\,S_a$ is even or infinite.
\end{lem}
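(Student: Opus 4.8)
The plan is to prove the two implications separately, in both cases exploiting the canonical weighting $\alpha(1)=(\alpha_a(1))$ attached to the length grading. By the preceding proposition this weighting records the parity of $\mathrm{id}\,S_a$: one has $\alpha_a(1)=0$ when $\mathrm{id}\,S_a$ is finite and odd, $\alpha_a(1)=1$ when it is finite and even, and $\alpha_a(1)\in(0,1)$ when it is infinite. In particular $\alpha_a(1)\ge 0$ for every $a$, and the three cases are mutually exclusive.

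First I would settle the direction ``$a$ not cyclic $\Rightarrow \mathrm{id}\,S_a$ odd''. If $a$ is not cyclic then $\alpha_a(1)=0$ by Lemma \ref{lem:weighting}(2) (equivalently by part (4) of the proposition). Were $\mathrm{id}\,S_a$ even we would get $\alpha_a(1)=1$, and were it infinite we would get $\alpha_a(1)\in(0,1)$; both contradict $\alpha_a(1)=0$. Hence $\mathrm{id}\,S_a$ is finite and odd. This already yields the inclusion $\{a:\mathrm{id}\,S_a\text{ even or infinite}\}\subseteq\{a:a\text{ cyclic}\}$.

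For the converse I would reduce everything to a positivity statement. By parts (1)--(3) and (5) of the proposition, $\alpha(1)$ is constant along each cycle $I$, and its common value $v_I$ on $I$ is either $0$ (and then \emph{every} vertex of $I$ has odd, hence finite, injective dimension) or strictly positive (and then every vertex of $I$ has even or infinite injective dimension). Thus the set in the lemma is a union of cycles, and the statement is equivalent to $v_I>0$ for every cycle $I$. Since by Theorem \ref{thm:mag} and Lemma \ref{lem:weighting} all cycles share the same periodicity $p$ and weight $w$, and $\alpha(1)$ is supported on the cyclic vertices, summation gives $\sum_a\alpha_a(1)=m(C_A)=p/w$. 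When $R_A$ is connected there is a single cycle, of length $p$, so $p\,v_I=p/w$ forces $v_I=1/w>0$, and the lemma follows in this case.

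The remaining, and main, obstacle is the case $\corank C_A\ge 1$, where $R_A$ has several cycles and $\det C_A=0$, so $A$ has infinite global dimension. Here I must exclude $v_I=0$, i.e. rule out a cycle all of whose vertices have finite odd injective dimension. My plan is to show directly that a cyclic vertex then has infinite injective dimension: starting from $S_a$ one follows the minimal injective coresolution through the recursion $s_{i+1}=\tau^{-l_i}(s_i)$, $l_{i+1}=q_{s_i}-l_i$ on socle-indices $s_i$ and cosyzygy-lengths $l_i$, with $s_0=a$ and $l_0=1$, for which a short computation gives even socle-indices $s_{2j}=\psi^{\,j}(a)$. One then argues that for a vertex lying on a $\gamma$-cycle this walk never meets the termination condition $l_i=q_{s_i}$, so the coresolution is eventually periodic and $\mathrm{id}\,S_a=\infty$, whence $v_I\in(0,1)$. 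The delicate point—and where $w>1$ enters—is that the coresolution dynamics are governed by the coresolution quiver $R^*_{A}$ (the function $\psi$), while cyclicity is read off from $R_A$ (the function $\gamma$); reconciling these two functional graphs is the heart of the argument and the step I expect to require the most care.
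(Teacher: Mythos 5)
This statement is quoted from Madsen \cite[3.7]{Mad05} and the paper gives no proof of it, so there is no in-paper argument to match; judging your proposal on its own terms, it has a genuine gap. Your first implication is fine: ``$a$ not cyclic $\Rightarrow \mathrm{id}\,S_a$ finite and odd'' follows cleanly from Lemma \ref{lem:weighting}(2) together with parts (1)--(3) of the proposition, since $\alpha_a(1)=0$ is incompatible with the values $1$ and $(0,1)$ attached to the even and infinite cases. But the converse is only established when $R_A$ has a single cycle. In the general case the weighting bookkeeping gives you only $\sum_I v_I=1/w$ over the $c$ cycles (all of the same periodicity $p$), and nothing in Lemma \ref{lem:weighting}, Theorem \ref{thm:mag}, or the proposition prevents one $v_I$ from being $0$ while the others absorb the total. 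You acknowledge this yourself: the final paragraph does not prove that a cyclic vertex cannot have finite odd injective dimension; it only sketches a dynamical argument about the coresolution recursion and defers ``the heart of the argument.'' As written, the case $\corank C_A\ge 1$ is not closed, and that is precisely the case where the lemma has real content.

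A way to close the gap without the weighting machinery at all: the dual of the statement quoted after Lemma \ref{lem:gamma-psi} (from \cite[3.5]{Rin21}) says that $\mathrm{id}\,S_a\neq 1,3,\dots,2m-1$ if and only if $a\in\Im\gamma^m$. Since the images $\Im\gamma^m$ decrease and stabilize to the set of cyclic vertices, $a$ is cyclic if and only if $a\in\Im\gamma^m$ for every $m$, which is equivalent to $\mathrm{id}\,S_a$ being no odd number, i.e.\ even or infinite. This gives both implications at once and avoids the multi-cycle difficulty entirely.
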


Suppose $A$ has  infinite global dimension.
Since $a$ is cyclic,
then $\mathrm{id}\,S_a$ is infinite.
Assume the minimal injective dimension $\xi$  is periodic after $2r$.
One can show that the periodicity is $2p$.
Then
\[\alpha_{a}(1)=\sum\nolimits_{0\leq i \leq p-1}\dim_k \Sigma^{2r+2i}(S_a)/nw.\]

Let $n_a$ be the cardinality of the component containing $a$ in $R_A$.
\begin{thm} \label{thm:main}
Let $A$ be a Nakayama algebra with length grading.
\begin{enumerate}
    \item If $a$ is not cyclic, then $\alpha_{a}(1)=0$;
    \item If  $a$ is cyclic, then $\alpha_{a}(1) = n_a/nw$.
\end{enumerate}
\end{thm}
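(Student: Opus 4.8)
The plan is to exploit that $\alpha(1)$ is already known to be a weighting on $C_A$ (by the Proposition above, parts (1)--(3)), so that its shape is tightly constrained by Lemma~\ref{lem:weighting}. Part (1) is then immediate: since $a$ is not cyclic, Lemma~\ref{lem:weighting}(2) forces $\alpha_a(1)=0$. For part (2) I would first reduce to computing a single number per connected component of $R_A$: by Lemma~\ref{lem:weighting}(3) the value $\alpha_a(1)$ is constant as $a$ runs over the cycle of its component, and all cyclic vertices of a component lie on that cycle. So it suffices to evaluate $\alpha_a(1)$ at one cyclic vertex of each component and check it equals $n_a/(nw)$. I would split according to whether $A$ has finite or infinite global dimension.

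If $A$ has finite global dimension, then $C_A$ is invertible, so there is a unique weighting. Since $\epsilon_I/w$ is a weighting by Lemma~\ref{lem:weighting}(1), it must coincide with $\alpha(1)$. Moreover $\corank C_A=0$, so $R_A$ has a single cycle (hence is connected) and $n_a=n$; reading off the cyclic coordinate gives $\alpha_a(1)=1/w=n/(nw)=n_a/(nw)$, as required.

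Now suppose $A$ has infinite global dimension, so every cyclic vertex $a$ has infinite injective dimension and the displayed formula preceding the theorem applies:
\[\alpha_a(1)=\frac{1}{nw}\sum_{0\le i\le p-1}\dim_k\Sigma^{2r+2i}(S_a).\]
Thus everything reduces to the combinatorial identity $\sum_{i=0}^{p-1}\dim_k\Sigma^{2r+2i}(S_a)=n_a$. To analyse the even cosyzygies I would track them through the coresolution quiver: since $\Sigma(S_b)=I_b/S_b$ is uniserial with socle $S_{\tau^{-1}(b)}$, an inductive computation shows each $\Sigma^{2i}(S_a)$ is uniserial with socle $S_{\psi^i(a)}$, where $\psi(b)=\tau^{-q(b)}(b)$. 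Hence, as $i$ runs over $0,\dots,p-1$, the socles run exactly once over the $p$-cycle of $a$ in $R^*_A$, and each $\Sigma^{2i}(S_a)$ is the $\tau^{-1}$-arc of some length $\ell_i$ starting at $\psi^i(a)$. The identity then becomes the statement that these $p$ arcs tile the vertex set of the component of $a$, each vertex being covered exactly once.

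Establishing this tiling is the step I expect to be the main obstacle. The total length is controlled --- combining the relation $b_{i+1}-b_i=\dim_k\Sigma^i(S_a)$ in the length grading with one full period gives $\sum_{i=0}^{p-1}q(\psi^i(a))=nw$, the coresolution analogue of $\sum_{a\in I}p_a=nw$ over a cycle --- but this only pins down the even and odd cosyzygy dimensions together, not the even part alone. The real content is to show there are neither gaps nor overlaps among the arcs: that each vertex $b$ in the component of $a$, including the non-cyclic (tree) vertices, occurs as a composition factor of exactly one even cosyzygy. I would prove this by relating the arc lengths $\ell_i$, governed by the recursion $\ell_{i+1}=\ell_i+q(\tau^{-\ell_i}(\psi^i(a)))-q(\psi^i(a))$, to the $\tau$-structure of the component, showing the arcs beyond the cyclic vertices account precisely for the tree vertices; a bijective or inductive argument on the distance of a vertex to the cycle, using that $\psi$ is a bijection on the cycle, should then yield coverage exactly once and hence the desired total $n_a$.
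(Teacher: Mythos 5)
Your overall architecture matches the paper's: part (1) and the reduction to one value per component via Lemma~\ref{lem:weighting}, the finite global dimension case via uniqueness of the weighting, and in the infinite case the reduction to the identity
\[
\sum\nolimits_{0\leq i\leq p-1}\dim_k\Sigma^{2r+2i}(S_a)=n_a .
\]
All of that is correct and is essentially how the paper sets things up. But this identity is not a reduction of the problem --- it \emph{is} the problem, and your proposal does not prove it. You candidly flag the ``tiling'' claim (that the $p$ consecutive even cosyzygies cover each vertex of the component exactly once, with no gaps and no overlaps) as the main obstacle, and the argument you offer for it --- a recursion on arc lengths $\ell_{i+1}=\ell_i+q(\tau^{-\ell_i}(\psi^i(a)))-q(\psi^i(a))$ together with an unspecified ``bijective or inductive argument on the distance of a vertex to the cycle'' --- is a plan, not a proof. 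In particular, knowing the socles $S_{\psi^i(a)}$ and the total length $\sum_i q(\psi^i(a))=nw$ of one period does not by itself separate the even from the odd cosyzygies, as you yourself observe, so the no-overlap/no-gap statement is left entirely open.

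The paper closes exactly this gap in Section~3 with genuinely different machinery: it characterizes the composition factors of even syzygies precisely, namely (Proposition~\ref{prop:iff}, in dual form) $S_b$ is a subfactor of $\Sigma^{2m}(S_a)$ if and only if $\gamma^m(b)=a$, with multiplicity one. This is proved not by tracking arcs in $\Delta_n$ but by the $\epsilon$-construction: removing the leaves of $R_A$ gives a smaller Nakayama algebra $eAe$, an adjoint pair of exact functors identifies $\Omega^{2m}(S_a)$ with the image $E^m(S_{\gamma^m(a)})$ of a simple module over $\epsilon^m(A)$, and Madsen's lemma plus the bijectivity of $\gamma^m\colon\Im\psi^m\to\Im\gamma^m$ (Lemma~\ref{lem:gamma-psi}) pins down the composition factors. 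Once that is in hand, the count $\sum_{0\leq i\leq p-1}|\gamma^{-r-i}(a)|=n_a$ is immediate because every vertex of the component lands on the cycle after $r$ steps and then meets $a$ for exactly one index $i$ modulo $p$. So your proposal is right about where the difficulty lies but supplies no proof of it; to complete your route you would need to establish the multiplicity-one subfactor criterion (or an equivalent) by some means, which is precisely the content of the paper's Section~3.
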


Suppose  $A$ has finite global dimension.
Then we have $w=1$ and $n_a=n$ by \cite[1.1]{She17}. 
Since $a$ is cyclic, then $\mathrm{id}\,S_a$ is even.
So Theorem \ref{thm:main} holds in this case.
We study syzygy module of simple modules and prove Theorem \ref{thm:main} for
Nakayama algebras of infinite global dimension in the next section.

\begin{rmk}
In \cite{Mad05}, Madsen proves that a Nakayama algebra $A$ has finite global dimension if and only if
there is a cyclic vertex in $R_A$ with even injective dimension.
In \cite{She17}, it is shown that Nakayama algebra $A$ has finite global dimension if and only if
$R_A$ is connected of weight $1$.
Using Theorem \ref{thm:main}, one can prove the equivalence of these two criteria.
\end{rmk}

\section{Syzygy of a simple module}
Let $A$ be a Nakayama algebra.
A vertex  $a$ in  $R_A$ is called a leaf if there is no arrow ending at $a$.
By \cite[3.3]{She17}, $a$ is a leaf in $R_A$ if and only if $\mathrm{id}\,S_a=1$.
The algebra $A$ is selfinjective if and only if there is no leaf in $R_A$.

Let $e=\sum_{a\in \Im \gamma}e_a$ be the sum of the idempotent at all non-leaf vertices 
and let $\epsilon (A)=eAe$.
Then $\epsilon(A)$ is a Nakayama algebra with fewer vertices than $A$.
Moreover,  $\epsilon(A)=A$ if and only if $A$ is selfinjective.
If $A$ is a cyclic Nakayama algebra, then 
$\epsilon(A)$ coincides with Sen’s $\epsilon$-construction \cite{Sen21,Sen19}.

Let $\mod A$ be the category of finite dimensional modules over $A$.
By \cite[3.6]{HI20}, there is an adjoint pair of exact functors
\[\begin{tikzcd}
    \mod A \ar[r,shift right=0.3em, "F=(-)e" below] &\mod\epsilon(A)
    \ar[l,shift right=0.3em,"G=(-)\otimes_{eAe} eA" above].
\end{tikzcd}\]

Denote by $\F$  the image of the functor $GF$
and by $E(S_a)$ the image  of $S_a$ for each $a$ in $R_A$.
Then $\{E(S_a)\mid a\in \Im\gamma\}$
is a complete set of simple objects in $\F$.
Moreover, $\F$ is an exact subcategory of $\mod A$ and $\F$ is closed under projective covers.
\begin{enumerate}
    \item  If $a\notin \Im \gamma$, then $E(S_a)=0$;
    \item  If $a\in \Im \gamma$,  then $\top E(S_a)=S_a$;
    \item  $\sum_{a}\mathbf{dim}\,E(S_a)=(1,1,\cdots,1)$;
    \item  $\Omega^{2}(M)\in \F$ for any $M$ in $\mod A$.
\end{enumerate}

\begin{lem} \label{lem:fis}
Let $M$ be an indecomposable module.
\begin{enumerate}
    \item\cite[3.2]{Rin13} Either $\mathrm{pd}\,M\leq 1$ or else $\top \Omega^{2}(M)=\gamma(\top M)$.
    \item\cite[A.1]{Rin21} The map $\gamma\colon \Im\psi\to \Im\gamma$ is bijective with inverse $\psi$.
\end{enumerate}
\end{lem}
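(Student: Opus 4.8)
The plan is to treat the two parts separately, deducing (1) from the explicit uniserial structure of modules over a Nakayama algebra, and (2) from an order-theoretic adjunction between $\gamma$ and $\psi$ on the universal cover of the cyclic quiver.

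For part (1), recall that every indecomposable $A$-module is uniserial, hence determined by its top $S_a$ and its length $\ell$; write it $M=M(a,\ell)$, so $1\le \ell\le p_a$. First I would dispose of $\ell=p_a$, where $M=P_a$ is projective and $\mathrm{pd}\,M=0$. When $\ell<p_a$, the projective cover $P_a\twoheadrightarrow M$ gives $\Omega(M)=\rad^\ell P_a$, again uniserial with $\top\Omega(M)=S_{\tau^\ell(a)}$ and length $p_a-\ell$. Since $\Omega(M)$ is a quotient of $P_{\tau^\ell(a)}$ one has $p_a-\ell\le p_{\tau^\ell(a)}$, so either $\Omega(M)=P_{\tau^\ell(a)}$ is projective (whence $\mathrm{pd}\,M\le 1$) or $p_a-\ell<p_{\tau^\ell(a)}$, in which case repeating the computation yields $\top\Omega^2(M)=S_{\tau^{p_a-\ell}(\tau^\ell(a))}=S_{\tau^{p_a}(a)}=\gamma(\top M)$. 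This is a direct calculation with no real obstacle; the only points to check are the length estimate $p_a-\ell\le p_{\tau^\ell(a)}$ and the bookkeeping of tops under $\tau$.

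For part (2) the key is to pass to the universal cover $\mathbb{Z}\to\mathbb{Z}/n$ and regard the Gustafson function and its injective analogue as $n$-equivariant, weakly increasing maps $\tilde\gamma,\tilde\psi\colon\mathbb{Z}\to\mathbb{Z}$, where in additive notation $\tilde\gamma(a)=a+p_a$ and $\tilde\psi(a)=a-q_a$; monotonicity of $\tilde\gamma$ is exactly the admissibility inequality $p_a\le p_{a+1}+1$ (that of $\tilde\psi$ then being automatic, as a right adjoint). The crux is to prove the adjunction
\[\tilde\gamma(c)\le b \iff c\le \tilde\psi(b),\]
equivalently $\tilde\psi(b)=\max\{c:\tilde\gamma(c)\le b\}$. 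I would establish this by describing $I_b$ as the longest uniserial module with socle $S_b$: a uniserial module with top $S_c$ and socle $S_b$ exists precisely when its length $b-c+1$ is at most $p_c$, i.e. when $\tilde\gamma(c)>b$, and the injective envelope $I_b$ realises the minimal admissible $c$, namely $c=\tilde\psi(b)+1$. Working on the cover sidesteps the bookkeeping of composition-factor multiplicities that occur when $q_b>n$.

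Granting the adjunction, part (2) is formal: for a monotone Galois connection $\tilde\gamma\dashv\tilde\psi$ one has $\tilde\gamma\tilde\psi\tilde\gamma=\tilde\gamma$ and $\tilde\psi\tilde\gamma\tilde\psi=\tilde\psi$, so $\tilde\gamma\tilde\psi=\mathrm{id}$ on $\Im\tilde\gamma$ and $\tilde\psi\tilde\gamma=\mathrm{id}$ on $\Im\tilde\psi$; hence $\tilde\gamma$ and $\tilde\psi$ restrict to mutually inverse bijections between $\Im\tilde\psi$ and $\Im\tilde\gamma$. Finally I would descend along $\mathbb{Z}\to\mathbb{Z}/n$: all maps are $n$-equivariant, so $\Im\gamma$ and $\Im\psi$ are the reductions of $\Im\tilde\gamma$ and $\Im\tilde\psi$ and the bijection, together with its inverse $\psi$, passes routinely to the quotient. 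The main obstacle is the proof of the adjunction itself — obtaining the characterization of $I_b$ and correctly matching $\tilde\psi(b)$ with $\max\{c:\tilde\gamma(c)\le b\}$ — after which the bijection statement is a standard fact about Galois connections.
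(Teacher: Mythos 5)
Both parts of this lemma are quoted by the paper from Ringel's work (\cite[3.2]{Rin13} and \cite[A.1]{Rin21}) without proof, so there is no in-paper argument to compare against; judged on its own terms, your proof is correct. For (1) you reproduce essentially the standard computation: writing the non-projective uniserial $M=M(a,\ell)$ with $\ell<p_a$, you get $\Omega(M)=\rad^\ell P_a$ of length $p_a-\ell$ and top $S_{\tau^\ell(a)}$, and the dichotomy $p_a-\ell=p_{\tau^\ell(a)}$ (projective syzygy, so $\mathrm{pd}\,M\le 1$) versus $p_a-\ell<p_{\tau^\ell(a)}$ (then $\top\Omega^2(M)=S_{\tau^{p_a}(a)}=\gamma(\top M)$) is exactly right. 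For (2) your route differs in packaging from Ringel's appendix: you lift $\gamma$ and $\psi$ to $n$-equivariant monotone maps $\tilde\gamma(a)=a+p_a$, $\tilde\psi(a)=a-q_a$ on $\bbZ$ and verify the adjunction $\tilde\gamma(c)\le b\iff c\le\tilde\psi(b)$ by characterizing $I_b$ as the longest uniserial module with socle $S_b$; the mutual-inverse statement on $\Im\tilde\psi$ and $\Im\tilde\gamma$ is then the standard consequence $\tilde\gamma\tilde\psi\tilde\gamma=\tilde\gamma$, $\tilde\psi\tilde\gamma\tilde\psi=\tilde\psi$ of a Galois connection, and everything descends along $\bbZ\to\bbZ/n$ by equivariance. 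The key verification is sound: for $c\le b$ a uniserial module with top $S_c$ and socle $S_b$ exists iff $b-c+1\le p_c$ iff $b-c+1\le q_b$, which is precisely the adjunction, and the case $c>b$ is vacuous on both sides. The Galois-connection formulation buys you the bijectivity for free and avoids the multiplicity bookkeeping in Ringel's direct count; the only points worth making explicit in a written version are that $\tilde\gamma(c)\ge c+1$ (so the relevant sets are bounded) and that the descent of the bijection to $\bbZ/n$ uses $n$-equivariance of both maps.
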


\begin{lem}\cite[3.1]{Mad05}
Let $S_a$ and $S_b$ be simple modules with $S_a$  not being projective.
Then $S_b$ is a subfactor  of $\Omega^2(S_a)$ (with multiplicity $1$) if and only if $\psi(b)=a$.
\end{lem}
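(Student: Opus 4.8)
The plan is to make everything explicit on the unique uniserial syzygy $\Omega^2(S_a)$ and then match its composition factors against the fibre $\psi^{-1}(a)$ by a direct computation with the Kupisch data. First I would record that $S_a$ being non-projective means $p_a\ge 2$, so that $\Omega(S_a)=\rad P_a$ is the uniserial module of length $p_a-1$ with top $S_{\tau(a)}$; its projective cover is $P_{\tau(a)}$ and hence
\[\Omega^2(S_a)=\rad^{\,p_a-1}P_{\tau(a)}.\]
This is uniserial with composition factors $S_{\tau^j(a)}$ for $p_a\le j\le p_{\tau(a)}$, read from the top $S_{\gamma(a)}$ down to the socle. Since $p_a\ge 2$ and $p_{\tau(a)}\le n$, its length $p_{\tau(a)}-p_a+1$ is at most $n-1$, so the factors $S_{\tau^j(a)}$ are pairwise distinct: every subfactor occurs with multiplicity exactly $1$, which disposes of the parenthetical clause. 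Thus the claim reduces to the set equality $\{\,\tau^j(a):p_a\le j\le p_{\tau(a)}\,\}=\{\,b:\psi(b)=a\,\}$.

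The bridge between the two sides is a formula for the injective length: $I_b$ is the largest uniserial module with socle $S_b$, and a uniserial module of length $\ell$ with socle $S_b$ exists precisely when the projective at its top $S_{\tau^{1-\ell}(b)}$ is long enough, giving
\[q_b=\max\{\ell\ge 1 : p_{\tau^{1-\ell}(b)}\ge \ell\}\]
(the maximum exists since uniserial lengths are bounded by $\max_c p_c$). For the forward inclusion I would fix a factor $b=\tau^j(a)$ with $p_a\le j\le p_{\tau(a)}$ and show $q_b=j$. The choice $\ell=j$ is admissible because $\tau^{1-j}(b)=\tau(a)$ and $p_{\tau(a)}\ge j$; no larger $\ell=j+m$ with $m\ge1$ is, because $\tau^{1-\ell}(b)=\tau^{-(m-1)}(a)$ and the admissibility inequality $p_{\tau^{-1}(c)}\le p_c+1$ telescopes to $p_{\tau^{-(m-1)}(a)}\le p_a+(m-1)\le j+m-1<\ell$. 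Hence $q_b=j$ and $\psi(b)=\tau^{-q_b}(b)=\tau^{-j}(\tau^j(a))=a$.

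For the converse I would start from $\psi(b)=a$, i.e. $b=\tau^{q_b}(a)$, and set $j=q_b$. Reading off the two defining properties of the maximum $q_b=j$: the value $\ell=j$ works, which forces $p_{\tau(a)}=p_{\tau^{1-j}(b)}\ge j$; and $\ell=j+1$ fails, which forces $p_a=p_{\tau^{-j}(b)}\le j$. Together these give $p_a\le j\le p_{\tau(a)}$, so $b=\tau^j(a)$ is exactly one of the listed composition factors. The only genuinely delicate point is the forward direction: showing $q_b$ is not merely $\ge j$ but equal to $j$ requires ruling out all larger lengths simultaneously, and this is where the admissibility condition $p_a\le p_{\tau(a)}+1$, via its telescoped form, is essential rather than incidental. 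The rest is bookkeeping with the uniserial structure; I would also check the degenerate case $p_{\tau(a)}=p_a-1$, where $\Omega^2(S_a)=0$ and both sides are empty, so the equivalence holds vacuously.
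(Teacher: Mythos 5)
The paper itself gives no proof of this lemma; it is quoted from Madsen \cite[3.1]{Mad05}. Your argument is a correct, self-contained proof of the main equivalence and takes the natural route: identify $\Omega^2(S_a)$ with $\rad^{p_a-1}P_{\tau(a)}$, characterize the injective length by $q_b=\max\{\ell\ge 1: p_{\tau^{1-\ell}(b)}\ge \ell\}$, and match the interval $p_a\le j\le p_{\tau(a)}$ against the fibre $\psi^{-1}(a)$. Both directions check out, including the degenerate case $p_{\tau(a)}=p_a-1$, and you correctly isolate the telescoped admissibility inequality as the crux of the forward direction.

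The one genuine error is in your justification of the multiplicity-one clause: the assertion $p_{\tau(a)}\le n$ is false for cyclic Nakayama algebras (for $A=k\Delta_n/\rad^\ell$ with $\ell>n$ every $p_c=\ell>n$). What you actually need is that the length $p_{\tau(a)}-p_a+1$ of $\Omega^2(S_a)$ is at most $n$, so that the indices $\tau^j(a)$ for $p_a\le j\le p_{\tau(a)}$ are pairwise distinct; this follows by telescoping admissibility all the way around the cycle, $p_{\tau(a)}\le p_{\tau^2(a)}+1\le \cdots \le p_{\tau^n(a)}+(n-1)=p_a+n-1$, using $\tau^n=\mathrm{id}$. (Alternatively, multiplicity one is a by-product of your own forward direction: a subfactor $S_b$ with $b=\tau^j(a)$ forces $q_b=j$, so $j$ is determined by $b$.) With that repair the proof is complete.
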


\begin{prop} \label{prop:ele}
Let $S_a$ and $S_b$ be simple modules.
\begin{enumerate}
   \item If $a\in \Im \gamma$, then $S_b$ is a subfactor of $E(S_a)$ if and only if $\psi(b)=\psi(a)$.
   \item If $\mathrm{pd}\,S_a\geq 2$, then  $\Omega^2(S_a)\cong E(S_{\gamma(a)})$.
\end{enumerate}
\end{prop}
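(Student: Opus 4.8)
The plan is to establish part (1) first by computing the module $E(S_a)=GF(S_a)$ explicitly, and then to deduce part (2) from (1) together with Madsen's lemma \cite[3.1]{Mad05} and Lemma \ref{lem:fis}. First I would compute $E(S_a)$ for $a\in\Im\gamma$ directly from the functors. Since $G$ is right exact and sends the indecomposable projective $\epsilon(A)$-module at $a$ to $P_a$, applying $G$ to the projective cover of the simple $\epsilon(A)$-module $S_ae$ yields a presentation from which a direct computation gives $E(S_a)\cong P_a/\,e_a\rad(A)eA$. In the uniserial module $P_a$ the submodule $e_a\rad(A)eA$ is the one generated by the shortest positive-length path starting at $a$ that ends at a vertex of $\Im\gamma$; such a path exists because $\gamma(a)=\tau^{p_a}(a)\in\Im\gamma$. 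Hence $E(S_a)$ is the uniserial module with top $S_a$ whose composition factors are exactly $S_a,S_{\tau a},\dots,S_{\tau^{k-1}a}$, where $\tau^k a$ is the first vertex of $\Im\gamma$ strictly after $a$. In other words, the composition factors of $E(S_a)$ form the $\tau$-arc from $a$ up to (but excluding) the next element of $\Im\gamma$.

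It then remains to identify this arc with the fibre $\psi^{-1}(\psi(a))$. For any $d\in\Im\psi$ with $\mathrm{pd}\,S_d\ge 2$, the syzygy $\Omega^2(S_d)$ is uniserial with composition factors $\{S_b\mid\psi(b)=d\}$ by Madsen's lemma, and its top is $S_{\gamma(d)}$ by Lemma \ref{lem:fis}(1); thus $\psi^{-1}(d)$ is a forward $\tau$-arc starting at $\gamma(d)\in\Im\gamma$. Moreover every fibre $\psi^{-1}(d)$ contains the vertex $\gamma(d)$ by Lemma \ref{lem:fis}(2), and since $\gamma\colon\Im\psi\to\Im\gamma$ is a bijection and the fibres partition $\{1,\dots,n\}$, each fibre contains exactly one vertex of $\Im\gamma$, namely $\gamma(d)$. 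Once one knows that \emph{every} fibre is a forward $\tau$-arc, the only way $|\Im\gamma|$ such consecutive arcs, each containing a single prescribed vertex of $\Im\gamma$, can tile the cycle $\bbZ/n$ is the consecutive tiling; hence $\psi^{-1}(\psi(a))$ is precisely the arc from $a$ to the next element of $\Im\gamma$, which is the composition series of $E(S_a)$ computed above. This proves (1).

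Given (1), part (2) is short. Suppose $\mathrm{pd}\,S_a\ge 2$. Then $\Omega^2(S_a)\in\F$, its top is $S_{\gamma(a)}$ by Lemma \ref{lem:fis}(1), and by Madsen's lemma $\mathbf{dim}\,\Omega^2(S_a)=\epsilon_{\psi^{-1}(a)}$. Since $\mathrm{pd}\,S_a\ne 1$ we have $a\in\Im\psi$, so $\psi(\gamma(a))=a$ by Lemma \ref{lem:fis}(2), and then (1) gives $\mathbf{dim}\,E(S_{\gamma(a)})=\epsilon_{\psi^{-1}(\psi(\gamma(a)))}=\epsilon_{\psi^{-1}(a)}$. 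Thus $\Omega^2(S_a)$ and $E(S_{\gamma(a)})$ have the same dimension vector and both lie in $\F$. Because the dimension vectors of the $\F$-simple objects $E(S_c)$ have pairwise disjoint supports summing to $(1,\dots,1)$, the expression of any dimension vector as a nonnegative combination of them is unique; hence an object of $\F$ whose dimension vector equals $\mathbf{dim}\,E(S_{\gamma(a)})$ has $\F$-length $1$ and is isomorphic to $E(S_{\gamma(a)})$. Therefore $\Omega^2(S_a)\cong E(S_{\gamma(a)})$.

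The main obstacle is the combinatorial identification in the second paragraph, specifically the claim that every fibre of $\psi$ is a single forward $\tau$-arc. For $d$ with $\mathrm{pd}\,S_d\ge 2$ this is delivered by the uniseriality of $\Omega^2(S_d)$, but when $S_d$ is projective — the remaining possibility for $d\in\Im\psi$, since $\mathrm{pd}\,S_d\ne 1$ — the syzygy vanishes and Madsen's lemma gives no information. I would handle this residual case either directly from the definition $\psi(b)=\tau^{-q(b)}(b)$ together with the co-admissible inequalities for the sequence $(q_a)$, or by a counting argument: the arcs attached to the non-projective sources already tile a portion of $\bbZ/n$ consecutively, and each projective-source fibre, known to contain its vertex $\gamma(d)=\tau(d)$, is forced to fill a complementary consecutive gap. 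Verifying that this forcing is clean — in particular that no fibre wraps past the next element of $\Im\gamma$ — is the delicate point.
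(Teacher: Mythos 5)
Your explicit computation of $E(S_a)$ as the uniserial module $P_a/e_a\rad(A)eA$ supported on the $\tau$-arc from $a$ up to (but excluding) the next vertex of $\Im\gamma$ is correct, and your argument for part (2) is sound once part (1) is available. The genuine gap is the one you flag yourself, and it is not a cosmetic one: your proof of part (1) rests on the claim that \emph{every} fibre $\psi^{-1}(d)$, $d\in\Im\psi$, is a forward $\tau$-arc starting at $\gamma(d)$, but you only establish this when $\mathrm{pd}\,S_d\geq 2$, where Madsen's lemma plus uniseriality of $\Omega^2(S_d)$ does the work. For $d\in\Im\psi$ with $S_d$ projective --- a case that genuinely occurs, e.g.\ for $k\Delta_2/(x_2)$ one has $\Im\psi=\{2\}$ with $S_2$ projective and $\psi^{-1}(2)=\{1,2\}$ --- you know only that the fibre contains $\gamma(d)=\tau(d)$. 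Since your tiling argument is global, a single fibre of unverified shape could in principle wrap past the next vertex of $\Im\gamma$ and invalidate the identification for \emph{every} $a$, not just the offending one. As written, part (1) is therefore not proved; either of the two repair strategies you sketch would have to be carried out in full, and the second (the ``forcing'' count) is itself circular unless you first rule out wrapping.

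The paper avoids this entirely and you may want to compare. For the ``only if'' direction it observes that any subfactor $S_b\neq S_a$ of $E(S_a)$ has $\mathrm{id}\,S_b=1$ (these are exactly the removed leaves), so the exact sequence $0\to S_b\to I_b\to I_{\tau^{-1}b}\to 0$ gives $\psi(b)=\psi(\tau^{-1}b)$; walking back along the arc yields $\psi(b)=\psi(a)$, i.e.\ the termwise inequality $\dim_kE(S_a)\leq|\psi^{-1}(\psi(a))|$. The converse then comes for free from a single global count,
\[\sum_{a\in\Im\gamma}\dim_kE(S_a)=n=\sum_{a'\in\Im\psi}|\psi^{-1}(a')|,\]
which forces equality in every term. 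No statement about the shape of the fibres of $\psi$ is ever needed, so the projective case never arises. The cleanest fix for your write-up is to replace the tiling argument of your second paragraph by this inclusion-plus-counting step; your first paragraph and your part (2) can stand as they are.
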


\begin{proof}
(1) Let $S_b\neq S_a$ be a subfactor of $E(S_a)$, then  $\mathrm{id}\,S_b=1$.
The sequence
\[0\to S_b\to I_b \to I_{\tau^{-1}b}\to 0\]
is exact. Then $\psi(b)=\psi(\tau^{-1}b)$.
If $\tau^{-1}(b)=a$, we are done.
If $\tau^{-1}(b)\neq a$, then we can repeat this process.
It follows that  $\psi(b)=\psi(a)$. Observe that
\[\sum_{a\in \Im \gamma}\dim_kE(S_a)\leq \sum_{a \in \Im\gamma}|\psi^{-1}(\psi(a))|
=\sum_{a' \in \Im\psi}|\psi^{-1}(a')|.\]
The last equality follows by Lemma \ref{lem:fis}(2).
Then the conclusion holds.

(2) By Lemma \ref{lem:fis}(1) the module $\Omega^2(S_a)$ and $E(S_{\gamma(a)})$ have the same top.
Since $\mathrm{pd}\, S_a\geq 2$, then $a=\psi\gamma(a)$.
A simple module $S_b$ is a subfactor of
$\Omega^2(S_a)$ if and only if $\psi(b)=\psi\gamma(a)$.
Then $\Omega^2(S_a)$ and $E(S_{\gamma(a)})$ have the same composition series.
They are isomorphic.
\end{proof}
Repeatedly removing the leaves, we obtain a sequence
\[\begin{tikzcd}
\mod A \ar[r,shift right=0.3em]
&\mod\epsilon(A)\ar[l,shift right=0.3em]\ar[r,shift right=0.3em]
&\cdots \ar[l,shift right=0.3em] \ar[r,shift right=0.3em]
&\mod\epsilon^r(A)\ar[l,shift right=0.3em]
\end{tikzcd}\]
of adjoint pair of exact functors
such that $\epsilon^r(A)$ is selfinjective.

Let $\F^m$ be the image of $\mod \epsilon^m(A)$ in $\mod A$,
and $E^m(S_a)$ be the image of $S_a$  for any $1\leq m\leq r$.
Then $\{E^m(S_a)\mid a\in \Im\gamma^m\}$ is a complete set of simple objects in $\F^m$.
Moreover, $\F^m$ is an exact subcategory of $\mod A$ and  is closed under projective covers.
\begin{enumerate}
    \item  If $a\notin \Im \gamma^m$, then $E^m(S_a)=0$;
    \item  If $a\in \Im \gamma^m$,  then $\top E^m(S_a)=S_a$;
    \item  $\sum_{a}\mathbf{dim}\,E^m(S_a)=(1,1,\cdots,1)$;
    \item  $\Omega^{2m}(M)\in \F^m$ for any $M$ in $\mod A$.
\end{enumerate}

\begin{lem} \label{lem:gamma-psi}
Let $m\in \bbN$ and $M$ be an indecomposable  module.
\begin{enumerate}
    \item \cite[3.3]{Rin13} Either $\mathrm{pd}\,M\leq 2m-1$ or else $\top \Omega^{2m}(M)=\gamma^m(\top M)$.
    \item \cite[A.1]{Rin21} The map $\gamma^m\colon \Im\psi^m\to \Im\gamma^m$ is bijective with inverse $\psi^m$.
\end{enumerate}
\end{lem}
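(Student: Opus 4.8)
The plan is to prove both statements by induction on $m$, taking Lemma \ref{lem:fis} as the base case $m=1$; the two parts require genuinely different arguments, so I would treat them separately.

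For part (1) the induction is routine. Suppose the statement holds for $m-1$ and let $M$ be indecomposable with $\mathrm{pd}\,M\geq 2m$ (otherwise $\mathrm{pd}\,M\leq 2m-1$ and nothing is to be shown). Over a Nakayama algebra the syzygy of an indecomposable non-projective module is again indecomposable, so every $\Omega^{i}(M)$ is indecomposable. Since $\mathrm{pd}\,M\geq 2m>2m-3$, the inductive hypothesis forces $\top\Omega^{2(m-1)}(M)=\gamma^{m-1}(\top M)$. Put $N=\Omega^{2(m-1)}(M)$; it is indecomposable with $\top N=\gamma^{m-1}(\top M)$ and $\mathrm{pd}\,N=\mathrm{pd}\,M-2(m-1)\geq 2$. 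Applying the base case Lemma \ref{lem:fis}(1) to $N$, whose projective dimension exceeds $1$, gives $\top\Omega^{2}(N)=\gamma(\top N)$. As $\Omega^{2}(N)=\Omega^{2m}(M)$ and $\gamma(\gamma^{m-1}(\top M))=\gamma^{m}(\top M)$, the claim for $m$ follows. The only input beyond the base case is the indecomposability of syzygies.

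For part (2) the strategy is again induction, now exploiting the tower of $\epsilon$-constructions. The functors $F$ and $G$ identify $\F^{m}$ with $\mod\epsilon^{m}(A)$ and carry $\Omega^{2m}$ on $\mod A$ to the syzygy on $\mod\epsilon^{m}(A)$; by Proposition \ref{prop:ele}(2) and its $m$-fold iterate this matches $\gamma^{m}$ with the Gustafson function of $\epsilon^{m}(A)$ on the vertex set $\Im\gamma^{m}$, and dually matches $\psi^{m}$ with the cosyzygy function of $\epsilon^{m}(A)$ on $\Im\psi^{m}$. I would first record the nestings $\Im\gamma^{m}\subseteq\Im\gamma^{m-1}$ and $\Im\psi^{m}\subseteq\Im\psi^{m-1}$, together with the identities $\gamma(\Im\psi)=\Im\gamma$ and $\psi(\Im\gamma)=\Im\psi$ extracted from the base case. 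It then suffices to verify that one step of the $\epsilon$-construction transforms the pair $(\gamma,\psi)$ into the corresponding pair for $\epsilon(A)$ on the restricted index sets, so that Lemma \ref{lem:fis}(2) applied to $\epsilon^{m-1}(A)$ yields the bijection $\gamma^{m}\colon\Im\psi^{m}\to\Im\gamma^{m}$ with inverse $\psi^{m}$.

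I expect the bookkeeping in part (2) to be the main obstacle. Unlike part (1), the functions $\gamma$ and $\psi$ govern tops of syzygies and socles of cosyzygies respectively, and the induction must track both simultaneously across the $\epsilon$-tower while controlling the precise images $\Im\gamma^{m}$ and $\Im\psi^{m}$. The cleanest closing move is likely to observe that $\epsilon^{r}(A)$ is selfinjective, where $\gamma$ and $\psi$ become mutually inverse permutations of the whole vertex set; the general case then follows by descending through the tower, each descent being controlled by the base-case identities $\gamma\psi=\mathrm{id}$ on $\Im\gamma$ and $\psi\gamma=\mathrm{id}$ on $\Im\psi$. Alternatively, part (2) can be obtained as the formal dual of part (1), once one checks that $\Omega$ and $\Sigma$ act as mutual inverses on the relevant stable subcategories.
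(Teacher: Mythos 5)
The paper offers no proof of this lemma: both parts are imported from Ringel, via the citations \cite[3.3]{Rin13} and \cite[A.1]{Rin21}, exactly as Lemma \ref{lem:fis} is, so the only comparison available is against the cited sources. Your argument for part (1) is correct and is essentially the intended one: over a Nakayama algebra every indecomposable module is uniserial, so its syzygy (a submodule of a uniserial projective cover) is again uniserial, hence indecomposable or zero, and the claim follows by applying Lemma \ref{lem:fis}(1) to $N=\Omega^{2(m-1)}(M)$ as you do.

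Part (2) contains a genuine gap. The statement is \emph{not} a formal consequence of the case $m=1$: knowing that $\gamma|_{\Im\psi}$ and $\psi|_{\Im\gamma}$ are mutually inverse does not yield $\psi^{m}\gamma^{m}=\mathrm{id}$ on $\Im\psi^{m}$ by cancellation. Writing $a=\psi(c)$ with $c\in\Im\psi^{m-1}$, the reduction $\gamma^{m}(a)=\gamma^{m-1}(\gamma\psi(c))=\gamma^{m-1}(c)$ requires $c\in\Im\gamma$, and $\Im\psi^{m-1}\not\subseteq\Im\gamma$ in general; for instance, for the cyclic Nakayama algebra on $\Delta_3$ with admissible sequence $(2,2,3)$ one computes $\Im\psi=\{1,2\}$, $\Im\gamma=\{1,3\}$, and $\gamma\psi(2)=1\neq 2$, so the cancellation fails at $2\in\Im\psi$. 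The alternative grouping needs the containment $\gamma(\Im\psi^{m})\subseteq\Im\psi^{m-1}$, which is likewise not formal. Your route through the $\epsilon$-tower defers exactly this point to the unverified claim that one step of the $\epsilon$-construction carries the pair $(\gamma,\psi)$ to the corresponding pair for $\epsilon(A)$; but $\epsilon(A)$ is obtained by deleting the complement of $\Im\gamma$ (leaves of the resolution quiver), not the complement of $\Im\psi$, so the compatibility of $\psi$ with the tower is the entire difficulty rather than bookkeeping. Invoking ``the $m$-fold iterate of Proposition \ref{prop:ele}(2)'' is also circular within this paper, since that iterate is Proposition \ref{prop:mth}, whose proof uses the very lemma at issue. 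Finally, part (2) is not the formal dual of part (1): the dual of (1) concerns socles of cosyzygies and $\psi$ alone, whereas (2) pairs the projective-side function $\gamma$ with the injective-side function $\psi$. The honest options are to cite \cite[A.1]{Rin21} as the paper does, or to reproduce Ringel's actual argument.
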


\begin{prop} \label{prop:mth}
Let $m\in \bbN$ and $S_a,S_b$ be simple modules.
\begin{enumerate}
   \item If $a\in \Im \gamma^m$, then $S_b$ is a subfactor of $E^m(S_a)$ if and only if $\psi^m(b)=\psi^m(a)$.
   \item If $\mathrm{pd}\,S_a\geq 2m$, then  $\Omega^{2m}(S_a)\cong E^m(S_{\gamma(a)})$.
\end{enumerate}
\end{prop}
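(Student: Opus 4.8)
The plan is to prove Proposition \ref{prop:mth} by induction on $m$, the case $m=1$ being exactly Proposition \ref{prop:ele}, and to organise the two parts precisely as in the proof of that proposition, replacing $\gamma,\psi,E$ and Lemma \ref{lem:fis} throughout by $\gamma^m,\psi^m,E^m$ and Lemma \ref{lem:gamma-psi}. The whole of part (1) reduces to a single \emph{necessity} statement: if $a\in\Im\gamma^m$ and $S_b$ is a subfactor of $E^m(S_a)$, then $\psi^m(b)=\psi^m(a)$. Once this is in hand, the reverse implication follows by the same counting used for $m=1$. I note in passing that the top computed in Lemma \ref{lem:gamma-psi}(1) is $\gamma^m(a)$, so the object in part (2) is $E^m(S_{\gamma^m(a)})$.

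For the counting step, property (3) of $E^m$ gives $\sum_{a\in\Im\gamma^m}\dim_k E^m(S_a)=n$ and forces each simple to occur in the family $\{E^m(S_a)\}_{a\in\Im\gamma^m}$ with total multiplicity one; hence the supports $\supp E^m(S_a)$ form a partition of $\{1,\dots,n\}$ into $|\Im\gamma^m|$ nonempty blocks. The fibres of $\psi^m$ form a second partition, into $|\Im\psi^m|$ blocks. Since $S_a=\top E^m(S_a)$, the index $a$ lies in $\supp E^m(S_a)$, so the necessity statement says that the first partition refines the second. By Lemma \ref{lem:gamma-psi}(2) the map $\gamma^m\colon\Im\psi^m\to\Im\gamma^m$ is a bijection, so the two partitions have equally many blocks; a refinement of partitions with equally many blocks is an equality. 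This yields $\supp E^m(S_a)=(\psi^m)^{-1}(\psi^m(a))$, which is exactly part (1) in both directions.

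The heart of the matter is therefore the necessity statement, and this is where I expect the main difficulty. I would prove it in the inductive step by factoring the construction through $B=\epsilon(A)$: writing $G$ for the functor $\mod B\to\mod A$, one has $E^m_A(S_a)=G\bigl(E^{m-1}_B(S_a)\bigr)$ for $a\in\Im\gamma_A^m$. Since $G$ is exact, every subfactor of $E^m_A(S_a)$ is an $A$-subfactor of some $G(S^B_c)=E_A(S_c)$ with $S^B_c$ a subfactor of $E^{m-1}_B(S_a)$; applying the induction hypothesis over $B$ and the case $m=1$ over $A$ gives $\psi_B^{m-1}(c)=\psi_B^{m-1}(a)$ and $\psi_A(b)=\psi_A(c)$. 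The crux is then a compatibility between the data attached to $A$ and to $\epsilon(A)$ --- concretely that $\gamma_{\epsilon(A)}$ is the restriction of $\gamma_A$ to $\Im\gamma_A$, that $\Im\gamma_A^m=\Im\gamma_B^{m-1}$, and that the induced relation on the coresolution side lets one combine the two constraints into $\psi_A^m(b)=\psi_A^m(a)$. Establishing this compatibility, i.e. controlling how $\psi$ transforms under $\epsilon$ and hence how subfactors propagate through $G$, is the genuinely new ingredient and the step I expect to take the most care.

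Finally, part (2) would follow from part (1) by copying the argument of Proposition \ref{prop:ele}(2). By Lemma \ref{lem:gamma-psi}(1) and $\mathrm{pd}\,S_a\ge 2m$, the module $\Omega^{2m}(S_a)$ has top $S_{\gamma^m(a)}=\top E^m(S_{\gamma^m(a)})$, and by Lemma \ref{lem:gamma-psi}(2) the same hypothesis gives $a=\psi^m\gamma^m(a)$. Using property (4), $\Omega^{2m}(S_a)\in\F^m$, together with an iteration of Madsen's description \cite[3.1]{Mad05} to identify the subfactors of $\Omega^{2m}(S_a)$ as the $S_b$ with $\psi^m(b)=a$, part (1) shows that $\Omega^{2m}(S_a)$ and $E^m(S_{\gamma^m(a)})$ have the same composition factors. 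As both are uniserial with the same top, they are isomorphic.
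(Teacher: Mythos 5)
Your part (1) follows the paper's strategy closely --- induction on $m$ through the $\epsilon$-construction for the forward implication, then the counting argument from property (3) and Lemma \ref{lem:gamma-psi}(2) to upgrade it to an equivalence --- and your partition formulation of the counting step is a clean rephrasing of what the paper does. The compatibility you flag as ``the step I expect to take the most care'' (how $\psi$ and the fibres of $\psi^{m-1}$ transform under passing from $A$ to $\epsilon(A)$) is indeed the point on which the induction hinges; the paper simply asserts the resulting statement ``$S_b$ is a subfactor of some $E^{m-1}(S_{a'})$ with $\psi(a')=\psi(a)$'' without spelling this out, so here you have identified, but not closed, the same gap the paper leaves implicit.

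Part (2) is where your route genuinely breaks down. You propose to identify the composition factors of $\Omega^{2m}(S_a)$ as the $S_b$ with $\psi^m(b)=a$ by ``an iteration of Madsen's description \cite[3.1]{Mad05}'' and then match them against $E^m(S_{\gamma^m(a)})$ via part (1). But Madsen's lemma describes $\Omega^2$ of a \emph{simple} module only, and $\Omega^{2m}(S_a)=\Omega^2\bigl(\Omega^{2m-2}(S_a)\bigr)$ applies $\Omega^2$ to a non-simple uniserial module; for a Nakayama algebra $\Omega^2(M)$ depends on $M$ only through its top and length, not through its list of composition factors (for instance $\Omega^2(P)=0$ for $P$ projective, whatever its factors), so the subfactors of $\Omega^2(M)$ are not the union of the subfactors of $\Omega^2(S_c)$ over the factors $S_c$ of $M$. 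The statement you want to iterate towards is exactly Proposition \ref{prop:iff}, which the paper \emph{deduces from} Proposition \ref{prop:mth}(2), so invoking it here is circular. The paper's actual argument avoids composition factors entirely: it observes that $\Omega^2$ sends a simple object of $\F^{k-1}$ to a simple object of $\F^k$ or to zero (this is Proposition \ref{prop:ele}(2) transported along the exact, projective-preserving functors between the $\mod\epsilon^k(A)$), so that by induction $\Omega^{2m}(S_a)$ is a simple object of $\F^m$; since it is nonzero when $\mathrm{pd}\,S_a\geq 2m$ and has top $S_{\gamma^m(a)}$ by Lemma \ref{lem:gamma-psi}(1), it must equal $E^m(S_{\gamma^m(a)})$. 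You should replace your composition-factor matching by this ``$\Omega^2$ preserves simplicity across the filtration $\F^{k-1}\supseteq\F^k$'' step.
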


\begin{proof}
(1) Let $S_b$ be a subfactor of $E^m(S_a)$.
If $m=1$,  then $\psi(b)=\psi(a)$ by Proposition \ref{prop:ele}(1).
If $m\geq 2$, then $S_b$ is a subfactor of an $E^{m-1}(S_{a'})$ with $\psi(a')=\psi(a)$.
By induction hypothesis,  $\psi^{m-1}(b)=\psi^{m-1}(a')$.
Then
\[\psi^m(b)=\psi\psi^{m-1}(b)=\psi\psi^{m-1}(a')=\psi\psi^{m-1}(a')=\psi^m(a).\]
By  Lemma \ref{lem:gamma-psi} we have
\[\sum_{a\in \Im \gamma^m}\dim_kE^m(S_a)\leq \sum_{a \in \Im\gamma^m}|\psi^{-m}(\psi^m(a))|
=\sum_{a' \in \Im\psi^m}|\psi^{-m}(a')|.\]
Then the equality holds.

(2) By Lemma \ref{lem:gamma-psi}(1) the top of $\Omega^{2m}(S_{a})$ and $E^m(S_{\gamma^m(a)})$ are equal.
For any $k\in \bbN$, the operation $\Omega^{2}$ sends simple objects in $\F^{k-1}$ to simple objects in $\F^{k}$ or zero.
Then  $\Omega^{2m}(S_a)$ is isomorphic to  $E^m(S_{\gamma^m(a)})$
\end{proof}

Let $m\in \bbN$.
Following \cite[3.5]{Rin21} we get that $\mathrm{pd}\,S_a\neq 1,3,\cdots, 2m-1$  if and only if $a \in \Im\psi^m$ .

\begin{prop} \label{prop:iff}
Let $m\in \bbN$ and $S_a$ be a simple module.
If $\mathrm{pd}\, S_{a}\geq 2m-1$,
then $S_b$ is a subfactor of $\Omega^{2m}(S_{a})$ if and only if $\psi^m(b)=a$.
\end{prop}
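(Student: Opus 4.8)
The plan is to split the argument according to whether $\mathrm{pd}\, S_a = 2m-1$ exactly or $\mathrm{pd}\, S_a \geq 2m$, and in each case to verify that both sides of the asserted equivalence hold for exactly the same set of indices $b$ (possibly the empty set). The hypothesis $\mathrm{pd}\, S_a \geq 2m-1$ guarantees these are the only two possibilities.

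First I would dispose of the boundary case $\mathrm{pd}\, S_a = 2m-1$. Here the minimal projective resolution of $S_a$ has length $2m-1$, so $\Omega^{2m-1}(S_a)$ is projective and $\Omega^{2m}(S_a)=0$; consequently no simple module is a subfactor of $\Omega^{2m}(S_a)$, and the left-hand side fails for every $b$. On the other hand $2m-1\in\{1,3,\dots,2m-1\}$, so by the observation preceding the proposition (following \cite[3.5]{Rin21}) we have $a\notin\Im\psi^m$; hence no $b$ satisfies $\psi^m(b)=a$ and the right-hand side likewise fails for every $b$. Both sides are vacuously false, so the equivalence holds.

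The main case is $\mathrm{pd}\, S_a \geq 2m$. By Proposition \ref{prop:mth}(2) there is an isomorphism $\Omega^{2m}(S_a)\cong E^m(S_{\gamma^m(a)})$, and since $\gamma^m(a)\in\Im\gamma^m$, Proposition \ref{prop:mth}(1) applies and tells us that $S_b$ is a subfactor of $\Omega^{2m}(S_a)$ if and only if $\psi^m(b)=\psi^m(\gamma^m(a))$. It then remains to simplify the right-hand condition to $\psi^m(b)=a$. Because $\mathrm{pd}\, S_a\geq 2m$, its value lies outside $\{1,3,\dots,2m-1\}$, so $a\in\Im\psi^m$; Lemma \ref{lem:gamma-psi}(2), which identifies $\psi^m$ as the inverse of the bijection $\gamma^m\colon\Im\psi^m\to\Im\gamma^m$, then yields $\psi^m(\gamma^m(a))=a$. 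Substituting gives the desired equivalence.

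The only delicate point is the bookkeeping at the boundary $\mathrm{pd}\, S_a = 2m-1$: one must confirm that $\Omega^{2m}(S_a)$ genuinely vanishes (rather than surviving as a nonzero projective syzygy) and, simultaneously, that $a$ drops out of $\Im\psi^m$, so that the two sides collapse together. Once the projective-dimension criterion for membership in $\Im\psi^m$ is invoked, both cases reduce to direct applications of the structure results for $E^m(S_a)$ already established, and no further computation is needed.
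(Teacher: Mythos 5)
Your proof is correct and follows essentially the same route as the paper's: the same split into the cases $\mathrm{pd}\,S_a=2m-1$ (where both sides are vacuously false) and $\mathrm{pd}\,S_a\geq 2m$ (where one combines $\Omega^{2m}(S_a)\cong E^m(S_{\gamma^m(a)})$ with the subfactor description of $E^m$ and the identity $\psi^m\gamma^m(a)=a$). Your write-up is merely more explicit about the bookkeeping in the boundary case.
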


\begin{proof}
If  $\mathrm{pd}\, S_{a}= 2m-1$, then  $a\notin \Im\psi^m$ and $\Omega^{2m}(S_{a})$ is zero.
If  $\mathrm{pd}\, S_{a}\geq 2m$, then $a\in \Im\psi^m$ and  $\psi^m\gamma^m(a)=a$.
Since $\Omega^{2m}(S_a)$
and $E^m(S_{\gamma^m(a)})$ are isomorphic,
$S_b$ is a subfactor of $\Omega^{2m}(S_{a})$
if and only if $\psi^m(b)=a$.
\end{proof}

\iffalse
\begin{cor}
Let $A$ be a Nakayama algebra and $S_a$ be a simple module.
If $\mathrm{pd}\, S_a$ is not odd, then
\[\mathrm{pd}\,S_a=\min\{2m\mid p_{\gamma^m(a)}=|\psi^{-m}(a)|\}.\]
\end{cor}
\fi

\begin{thm}
Let $A$ be a Nakayama algebra, and $S_a$ be a simple module.
Suppose a minimal projective resolution of $S_a$ is periodic after $2r$. Then
\[\sum\nolimits_{0\leq i\leq p-1}\dim_k\Omega^{2r+2i}(S_{a})=n'_{a}\]
where $p$  is the periodicity of  $R^*_{A}$  and $n'_{a}$ is the cardinality
of the component containing $a$  in $R^*_{A}$.
\end{thm}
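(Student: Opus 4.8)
The plan is to rewrite each syzygy dimension as a fibre count of the map $\psi$ on the coresolution quiver $R^*_A$, and then to sum one full period by a windowing argument. Since the minimal projective resolution of $S_a$ is periodic, $\mathrm{pd}\,S_a=\infty$; by the characterization preceding Proposition \ref{prop:iff} this forces $a\in\Im\psi^m$ for every $m$, so $a$ is a cyclic vertex of $R^*_A$ and lies on a cycle of periodicity $p$, with $\psi^p(a)=a$.

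First I would compute the individual summands. The simple modules of a Nakayama algebra are one-dimensional, so $\dim_k\Omega^{2m}(S_a)$ equals the number of composition factors of $\Omega^{2m}(S_a)$ counted with multiplicity. Property (3) of the subcategory $\F^m$ says that each simple occurs in exactly one of the modules $E^m(S_{a'})$ and with multiplicity one; since $\Omega^{2m}(S_a)$ is one of these modules by Proposition \ref{prop:mth}(2) (applicable as $\mathrm{pd}\,S_a=\infty\geq 2m$), every composition factor of $\Omega^{2m}(S_a)$ occurs exactly once. Combining this with Proposition \ref{prop:iff}, which identifies the composition factors of $\Omega^{2m}(S_a)$ as those $S_b$ with $\psi^m(b)=a$, I obtain
\[f(m):=\dim_k\Omega^{2m}(S_a)=|\psi^{-m}(a)|\qquad\text{for all }m\geq 1.\]

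The left-hand side of the theorem is then the windowed sum $W(r):=\sum_{i=0}^{p-1}f(r+i)$ of $p$ consecutive values of $f$. The periodicity of the resolution after $2r$ means $f(m+p)=f(m)$ for all $m\geq r$, hence $W(m+1)-W(m)=f(m+p)-f(m)=0$, so $W(m)$ is constant for $m\geq r$. It therefore suffices to evaluate $W(m)$ for one large $m$, say $m$ at least the maximal distance from a vertex of the component of $a$ to the cycle. For such $m$ the double count
\[W(m)=\bigl|\{(b,i):0\leq i\leq p-1,\ \psi^{m+i}(b)=a\}\bigr|\]
can be carried out over the first coordinate: any $b$ with $\psi^{m+i}(b)=a$ lies in the component of $a$, and for each such $b$ the value $\psi^{m+i}(b)$ runs exactly once through the $p$ cycle vertices as $i$ ranges over $\{0,\dots,p-1\}$, hitting $a$ for a single $i$. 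Thus $W(m)=n'_a$, and by constancy $W(r)=n'_a$ as well.

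The step I expect to require the most care is the passage from the homological quantity $\dim_k\Omega^{2m}(S_a)$ to the clean fibre count $|\psi^{-m}(a)|$, since it rests on each relevant simple appearing with multiplicity exactly one; this is precisely what Proposition \ref{prop:mth}(2) together with property (3) of $\F^m$ supplies. The \emph{windowing} device is what lets me sidestep having to assume that $r$ already exceeds all distances to the cycle: periodicity only guarantees that every length-$p$ window beyond $2r$ has the same sum, and I transport the easy large-$m$ evaluation back to $m=r$.
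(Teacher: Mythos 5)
Your proof is correct and follows essentially the same route as the paper: both reduce, via Proposition \ref{prop:iff} (together with the multiplicity-one property of the subcategories $\F^m$), the dimension $\dim_k\Omega^{2m}(S_a)$ to the fibre count $|\psi^{-m}(a)|$, and then sum these counts over one period. The paper simply asserts that $\sum_{0\leq i\leq p-1}|\psi^{-r-i}(a)|=n'_a$, whereas your windowing argument supplies the justification of this final counting step; this is a welcome elaboration, not a different method.
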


Note that Theorem \ref{thm:main} follows by a dual version of this theorem.

\begin{proof}
By Proposition \ref{prop:iff} a simple module $S_b$ is a subfactor of $\Omega^{2r+2i}(S_a)$
if and only if $\psi^{r+i}(b)=a$.
Then
\[\sum\nolimits_{0\leq i\leq p-1}\dim_k\Omega^{2r+2i}(S_{a})=
\sum\nolimits_{0\leq i \leq p-1}|\psi^{-r-i}(a)|=n'_a.\]
The proof is finished.
\end{proof}

\section{Graded Cartan determinant}
We calculate the determinant of the graded Cartan matrix of a Nakayama algebra with grading.

Let $A$ be a Nakayama algebra.
Suppose $A$ is not selfinjective.
Then the resolution quiver $R_A$ has a leaf.
Let $n$ be a leaf and let $e'=1-e_n$.
The algebra $A'=e'Ae'$  is a Nakayama algebra of order $n-1$.

For any grading  $\mathbf{d}$ is on $A$,
let $\mathbf{d'}$ be the grading on $A'$ where
\[\mathbf{d'}(x_a)=\begin{cases} \mathbf{d}(x_{a})+\mathbf{d}(x_{a+1})& \text{if}\; a=n-1 \\
    \mathbf{d}(x_{a}) & \text{otherwise}.
\end{cases}\]
\begin{lem}[{\cite[3.8]{HI20}}] \label{lem:res}
Suppose $n$ is a leaf in $R_A$.
\begin{enumerate}
    \item  $R_{A'}$ equals  $R_A$ with $n$ removed;
    \item  $R_{A'}$ and $R_A$ have the same weight;
    \item  $R_{A'}$ and $R_A$ have the same number of component.
\end{enumerate}
\end{lem}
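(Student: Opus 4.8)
The plan is to reduce all three assertions to a single comparison of the Gustafson functions of $A$ and $A'$, the crux being an explicit dictionary between their admissible sequences. The key observation, which I expect to be the main technical point, is that for $a\in\{1,\dots,n-1\}$ the indecomposable projective $A'$-module at $a$ is $P'_a=P_ae'=e_aAe'$, the restriction of $P_a$ to the vertices $\ne n$. Since $A$ is Nakayama, $P_a$ is uniserial with composition factors $S_a,S_{\tau(a)},\dots,S_{\tau^{p_a-1}(a)}$ read from top to socle; hence $P'_a$ is uniserial with top $S_a$ and its composition factors are obtained by deleting every copy of $S_n$. Writing $k_a$ for the number of indices $0\le j\le p_a-1$ with $\tau^j(a)=n$, this gives $p'_a=p_a-k_a$. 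Establishing this identification carefully, and keeping track of subscripts modulo $n$ versus modulo $n-1$, is where the real work lies; the remaining steps are formal consequences.

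First I would prove (1), namely $\gamma_{A'}(a)=\gamma_A(a)$ for all $a\le n-1$; since $n$ is a leaf it receives no arrow, so this identity says exactly that $R_{A'}$ is $R_A$ with the vertex $n$ and its unique outgoing arrow $n\to\gamma_A(n)$ deleted. The point is that $\tau'$ on $\{1,\dots,n-1\}$ is $\tau$ with the value $n$ skipped. I would argue by cases on $\soc P_a$. If $\soc P_a\ne S_n$, then $\soc P'_a=\soc P_a$, and the leaf hypothesis $\gamma_A(a)\ne n$ forces $\soc P_a\ne S_{n-1}$, so $\tau'$ and $\tau$ agree at $\soc P_a$ and $\gamma_{A'}(a)=\tau'\soc P'_a=\tau\soc P_a=\gamma_A(a)$. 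If $\soc P_a=S_n$, then $\gamma_A(a)=\tau(n)=1$, while deleting the bottom layer gives $\soc P'_a=S_{n-1}$ (here $p_a\ge 2$ since $a\ne n$) and $\gamma_{A'}(a)=\tau'(n-1)=1$; again the two agree.

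Next I would deduce (2). As $n$ is a leaf it lies on no cycle, so by (1) the cycles of $R_A$ and $R_{A'}$ coincide. Fixing a cycle $I=(a_1,\dots,a_p)$ with $a_{i+1}=\gamma_A(a_i)$, the relation $a_{i+1}\equiv a_i+p_{a_i}\pmod n$ refines to $a_{i+1}=a_i+p_{a_i}-n\,k_{a_i}$ with subscripts taken in $\{1,\dots,n\}$, because $k_{a_i}$ counts precisely the multiples of $n$ passed by the walk of length $p_{a_i}$ starting at $a_i$. Summing $a_{i+1}-a_i=p_{a_i}-n\,k_{a_i}$ around the cycle, the left-hand side telescopes to zero, yielding $\sum_{a\in I}k_a=\bigl(\sum_{a\in I}p_a\bigr)/n=w$. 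Using $p'_a=p_a-k_a$ and $\sum_{a\in I}p_a=nw$, the weight of $I$ computed in $R_{A'}$ is
\[
\frac{\sum_{a\in I}p'_a}{n-1}=\frac{nw-w}{n-1}=w,
\]
which is (2).

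Finally (3) follows from (1): in the functional graph $R_A$ the leaf $n$ has in-degree $0$ and out-degree $1$, its arrow pointing to $\gamma_A(n)\ne n$ (a self-loop would give $n$ an incoming arrow). Thus $n$ is a pendant vertex whose component also contains $\gamma_A(n)$, so deleting $n$ leaves that component nonempty and connected and affects no other component; hence $R_A$ and $R_{A'}$ have the same number of components.
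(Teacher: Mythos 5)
Your argument is correct. Note that the paper itself gives no proof of this lemma at all --- it is imported wholesale as \cite[3.8]{HI20} --- so your write-up is a self-contained substitute rather than a variant of an argument in the text. The two load-bearing points both check out: the dictionary $p'_a=p_a-k_a$ (valid because $(-)e'$ is exact, kills exactly the composition factors $S_n$, and sends $P_a$ to the indecomposable projective $e_aAe'$ over the Nakayama algebra $A'$, which is again uniserial), and the identity $a_{i+1}=a_i+p_{a_i}-n\,k_{a_i}$ with representatives in $\{1,\dots,n\}$, whose telescoping around a cycle gives $\sum_{a\in I}k_a=w$ and hence $\sum_{a\in I}p'_a=(n-1)w$. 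The case analysis in (1) correctly uses the leaf hypothesis twice (to rule out $\soc P_a=S_{n-1}$ in the first case, and to guarantee $p_a\ge 2$ and a second-from-bottom factor $S_{n-1}$ in the second), and (3) is sound --- though it follows even faster from (1) plus the observation that in a functional graph the number of components equals the number of cycles, which are unchanged since the leaf $n$ is not cyclic. The only mild caveat is that you lean on the paper's standing assertion that $A'=e'Ae'$ is a Nakayama algebra of order $n-1$ whose successor function $\tau'$ is $\tau$ with $n$ skipped (equivalently, that the arrow $n-1\to 1$ of $A'$ is the path $x_{n-1}x_n$); that is exactly the convention the paper sets up just before the lemma, so it is fair to assume.
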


\begin{lem} \label{lem:det}
We have $\det C^\mathbf{d}_{A}(t)=\det C^\mathbf{d'}_{A'}(t)$.
\end{lem}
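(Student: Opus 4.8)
The plan is to prove that removing a leaf $n$ from the resolution quiver leaves the graded Cartan determinant unchanged, by exhibiting an explicit relationship between the two graded Cartan matrices and exploiting the special structure a leaf forces on $C_A^{\mathbf{d}}(t)$.

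First I would analyze what being a leaf means for the algebra structure. Since $n$ is a leaf in $R_A$, Lemma~\ref{lem:fis}(1) and the preceding discussion give that $\mathrm{id}\,S_n=1$, so that the injective $I_n$ has a short socle structure; dually, I want to understand how the indecomposable projectives of $A$ relate to those of $A'=e'Ae'$ where $e'=1-e_n$. The key algebraic fact I expect to use is that passing from $A$ to $A'$ amounts to contracting the arrow $x_n$ together with $x_{n-1}$ (reflected in the definition of $\mathbf{d}'$, where $\mathbf{d}'(x_{n-1})=\mathbf{d}(x_{n-1})+\mathbf{d}(x_n)$). Concretely, I would show that for $a,b\in\{1,\dots,n-1\}$ the entries $c_{ab}^{\mathbf{d}'}(t)$ of $C_{A'}^{\mathbf{d}'}(t)$ agree with $c_{ab}^{\mathbf{d}}(t)$ except for contributions involving paths that pass through vertex $n$, and that those extra contributions are precisely captured by the last row and column of $C_A^{\mathbf{d}}(t)$.

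The main computation is then a block/row-reduction argument on the $n\times n$ matrix $C_A^{\mathbf{d}}(t)$. I would isolate the row and column indexed by $n$, and the idea is that because $n$ is a leaf (so $S_n$ has injective dimension $1$ and $P_n$ has a controlled radical structure), the $n$-th row or column of $C_A^{\mathbf{d}}(t)$ is a monomial multiple of another row or column plus a standard basis vector. Performing the corresponding elementary row and column operations---which do not change the determinant---I expect to reduce $C_A^{\mathbf{d}}(t)$ to a matrix that is block lower (or upper) triangular, with a $1\times 1$ block equal to $1$ coming from the diagonal entry $c_{nn}^{\mathbf{d}}(t)=1$ and an $(n-1)\times(n-1)$ block equal to $C_{A'}^{\mathbf{d}'}(t)$. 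The grading adjustment $\mathbf{d}'(x_{n-1})=\mathbf{d}(x_{n-1})+\mathbf{d}(x_n)$ is exactly what makes the contracted entry $c_{n-1,b}^{\mathbf{d}'}(t)$ match the combination of the $(n-1)$-th and $n$-th rows produced by the elimination, so this is where the bookkeeping must be done carefully.

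The hard part will be verifying the precise form of the $n$-th row and column of the graded Cartan matrix in terms of $t$-powers: I need to pin down exactly which path enters or leaves vertex $n$ and with what degree, to confirm that a single elementary operation (rather than a messy cascade) suffices and that it produces the shifted grading $\mathbf{d}'$ on $x_{n-1}$. Once the triangular reduction is established, the determinant factors as $1\cdot\det C_{A'}^{\mathbf{d}'}(t)$, giving the claimed equality $\det C_A^{\mathbf{d}}(t)=\det C_{A'}^{\mathbf{d}'}(t)$. I would also remark that setting $\mathbf{d}=0$ recovers the classical fact that deleting a leaf preserves the ordinary Cartan determinant, which serves as a useful sanity check on the sign and the triangularization.
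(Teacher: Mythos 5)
Your proposal is correct and follows essentially the same route as the paper: the paper uses the exact sequence $0\to S_n\to I_n\to I_{n-1}(\mathbf{d}(x_{n-1}))\to 0$ (coming from $\mathrm{id}\,S_n=1$ at the leaf $n$) to get $r_n-t^{\mathbf{d}(x_{n-1})}r_{n-1}=\epsilon_n$, performs exactly the single elementary row operation you describe to reach a block triangular matrix with a $1$ in the corner, and identifies the remaining block with $C^{\mathbf{d}'}_{A'}(t)$ via the degree contraction $\mathbf{d}'(x_{n-1})=\mathbf{d}(x_{n-1})+\mathbf{d}(x_n)$. The only slip is attributing the corner $1$ to the diagonal entry $c_{nn}^{\mathbf{d}}(t)$ being $1$ beforehand; it is $1$ only after the elimination, as the $\epsilon_n$ term.
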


\begin{proof}
Since $n$ is a leaf, the injective dimension of $S_n$ is equal to $1$.
There is an exact sequence of graded modules
\[0\to S_n \to I_n \overset{\alpha_{n-1}}\lto I_{n-1}(\mathbf{d}({x_{n-1}}))\to 0.\]

Denote by $r_a$ the $a$-th row of $C^\mathbf{d}_{A}(t)$.
It is the graded dimension vector of the injective module $I_a$.
The above sequence shows that
\[r_n-t^{\mathbf{d}({x_{n-1}})}r_{n-1}=\epsilon_n.\]

Adding the product of $-t^{\mathbf{d}({x_{n-1}})}$ times of $r_{n-1}$ to $r_n$
yields an elementary transformation
\[C^\mathbf{d}_{A}(t) \to \begin{pmatrix}C_1(t) & \ast \\ 0 & 1\end{pmatrix}.\]
We have $\det C^\mathbf{d}_{A}(t)=\det C_1(t)$.
It is routine to check that $C_1(t)$ is just the graded Cartan matrix of  $A'$ with grading $\mathbf{d'}$.
\end{proof}

Let $R_A$ be the resolution quiver of $A$.
Denote by $c$ the number of components in $R_A$
and by $w$ the weight of $R_A$.

\begin{lem} \label{lem:self}
Let $A=k\Delta_n/\rad^\ell$ be a selfinjective Nakayama algebra.
Let $\mathbf{d}$ be a grading on $A$ and  $d$ be the sum of degree of all arrows.
Then
\begin{enumerate}
    \item $\det C^\mathbf{d}_{A}(t)=(1-t^{d\ell/(n,\ell)})^{(n,\ell)}/(1-t^d)$;
    \item $w=\ell/(n,\ell)$ and $c=(n,\ell)$.
\end{enumerate}
\end{lem}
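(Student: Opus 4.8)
The plan is to prove the two statements separately: part (2) follows from the explicit shape of the Gustafson function, while part (1) is an eigenvalue computation for a weighted cyclic shift matrix. Throughout write $d_a=\mathbf{d}(x_a)$, so that $d=\sum_{a=1}^n d_a$.

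For (2), observe that since $A=k\Delta_n/\rad^\ell$ is selfinjective every indecomposable projective has dimension $\ell$, so $p_a=\ell$ for all $a$ and the Gustafson function is the translation $\gamma(a)=\tau^{p_a}(a)=\tau^\ell(a)=a+\ell \pmod n$. This is a permutation of $\{1,\dots,n\}$, so $R_A$ is a disjoint union of cycles and has no leaves, as expected for a selfinjective algebra. The orbit of any $a$ under $a\mapsto a+\ell$ has length equal to the additive order of $\ell$ in $\bbZ/n$, namely $n/(n,\ell)$, and the number of orbits is $(n,\ell)$. Hence $c=(n,\ell)$, each cycle has periodicity $p=n/(n,\ell)$, and its weight is $w=\tfrac1n\sum p_a=\tfrac1n\cdot p\cdot \ell=\ell/(n,\ell)$, which is indeed coprime to $p$.

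For (1), the surviving paths of $A$ are exactly those of length $s$ with $0\le s<\ell$; the unique path of length $s$ starting at $a$ ends at $a+s \pmod n$ and has degree $d_a+d_{a+1}+\cdots+d_{a+s-1}$. I would encode a single arrow by the weighted shift $N=DP$, where $P$ is the cyclic permutation matrix and $D=\mathrm{diag}(t^{d_1},\dots,t^{d_n})$; then the entry of $N^s$ indexed by $(a,b)$ is the generating function of the length-$s$ path between $a$ and $b$, and consequently
\[C^\mathbf{d}_A(t)=\sum\nolimits_{s=0}^{\ell-1}N^s\]
(up to transposing $N$ according to the path-direction convention, which does not affect determinants; the case $\ell>n$ is absorbed automatically, the entries becoming genuine sums). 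The decisive relation is $N^n=t^d I$, since one full turn around the cycle collects each arrow exactly once. A standard weighted-cycle determinant gives $\det(\lambda I-N)=\lambda^n-t^d$, so over the algebraic closure of $\bbQ(t)$ the matrix $N$ has the $n$ distinct eigenvalues $\lambda_0,\dots,\lambda_{n-1}$, namely the $n$-th roots of $t^d$, and is diagonalizable. Writing $f(X)=\sum_{s=0}^{\ell-1}X^s=(X^\ell-1)/(X-1)$ we then get $\det C^\mathbf{d}_A(t)=\prod_k f(\lambda_k)$. The denominator is immediate: evaluating $\prod_k(X-\lambda_k)=X^n-t^d$ at $X=1$ yields $\prod_k(\lambda_k-1)=(-1)^n(1-t^d)$.

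The heart of the argument, and the step I expect to be the main obstacle, is the numerator $\prod_k(\lambda_k^\ell-1)$. Fixing an $n$-th root $\mu$ of $t^d$ and a primitive $n$-th root of unity $\zeta$, one has $\lambda_k^\ell=\mu^\ell\zeta^{k\ell}$; since $(n,\ell)=g$ with $n=gn'$, $\ell=g\ell'$ and $\gcd(n',\ell')=1$, the values $\zeta^{k\ell}$ range over the $n'$-th roots of unity, each attained exactly $g$ times. Thus $\{\lambda_k^\ell\}$ is the set of $n'$-th roots of $\mu^{\ell n'}=t^{d\ell/(n,\ell)}$, each with multiplicity $(n,\ell)$, and computing $\prod_{\omega^{n'}=1}(\mu^\ell\omega-1)$ via $\prod(X-\omega)=X^{n'}-1$ gives $\prod_k(\lambda_k^\ell-1)=(-1)^n(1-t^{d\ell/(n,\ell)})^{(n,\ell)}$. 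The care needed here is purely bookkeeping: tracking the multiplicities and verifying $\mu^{\ell n'}=t^{d\ell/(n,\ell)}$. Dividing the two products, the signs $(-1)^n$ cancel and we obtain the claimed value $\det C^\mathbf{d}_A(t)=(1-t^{d\ell/(n,\ell)})^{(n,\ell)}/(1-t^d)$.
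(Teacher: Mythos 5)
Your proof is correct and follows essentially the same route as the paper: both express $C^{\mathbf{d}}_A(t)$ as the geometric series $\sum_{s=0}^{\ell-1}N^s$ in the weighted cyclic shift $N$ and reduce to $\det(\mathbb{I}-N^{\ell})/\det(\mathbb{I}-N)$, and part (2) is the same orbit count for $a\mapsto a+\ell$. The only difference is that you prove the identity $\det(\mathbb{I}-N^{k})=(1-t^{dk/(n,k)})^{(n,k)}$ by diagonalizing $N$ over $\overline{\mathbb{Q}(t)}$, whereas the paper simply asserts it.
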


\begin{proof}
(1) Let $D$ be the $n\times n$ matrix
\[D=\begin{pmatrix}
0 & 0 & \cdots &0 & t^{\mathbf{d}({\alpha_n})}\\
t^{\mathbf{d}({\alpha_1})} & 0 & \cdots & 0&0\\
0 & t^{\mathbf{d}({\alpha_2})} & \cdots & 0&0\\
\vdots & \vdots & \ddots  & \vdots & \vdots\\
0 & 0 & \cdots & t^{\mathbf{d}({\alpha_{n-1}})} &0\\
\end{pmatrix}\]
and let $\mathbb{I}$ be the $n\times n$ identity matrix.
Then
\[C^\mathbf{d}_{A}(t)=\mathbb{I}+D+\cdots+D^{\ell-1}=\frac{\mathbb{I}-D^{\ell}}{\mathbb{I}-D}.\]
We have $\det(\mathbb{I}-D^k)=(1-t^{dm/(n,k)})^{(n,k)}$ for any $k\geq 1$.
It follows that
\[\det C^\mathbf{d}_{A}(t)=(1-t^{d\ell/(n,\ell)})^{(n,\ell)}/(1-t^d).\]

(2) The admissible sequence of $A$ is $(\ell,\ell,\cdots, \ell)$.
Then $R_A$ consists of $(n,\ell)$ cycles of size $n/(n,\ell)$.
We have $w=\ell(n,\ell)$ and $c=(n,\ell)$.
\end{proof}

\begin{thm}
Let $A$ be a Nakayama algebra with grading $\mathbf{d}$.
Then the determinant of the graded Cartan matrix $C^\mathbf{d}_{A}(t)$ is
\[\det C^\mathbf{d}_{A}(t)=(1-t^{d w})^{c}/(1-t^d)\]
where $d$ is the sum of degree of all the arrows.
\end{thm}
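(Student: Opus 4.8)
The plan is to argue by induction on the order $n$ of $A$, using repeated deletion of leaves in the resolution quiver to reduce to the selfinjective case, which serves as the base.

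For the base case, suppose $A$ is selfinjective, so that $A\cong k\Delta_n/\rad^\ell$ for some $\ell$. Then Lemma \ref{lem:self}(1) gives $\det C^\mathbf{d}_A(t)=(1-t^{d\ell/(n,\ell)})^{(n,\ell)}/(1-t^d)$, while Lemma \ref{lem:self}(2) identifies $w=\ell/(n,\ell)$ and $c=(n,\ell)$. Substituting $d\ell/(n,\ell)=dw$ and $(n,\ell)=c$, the right-hand side becomes $(1-t^{dw})^c/(1-t^d)$, which is exactly the asserted formula. In particular this disposes of every Nakayama algebra of order one, since each such algebra is of the form $k\Delta_1/\rad^\ell$ and hence selfinjective.

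For the inductive step, I assume the formula for all Nakayama algebras of order less than $n$ with arbitrary grading, and take $A$ non-selfinjective of order $n$. Then $R_A$ has a leaf $n$, and I form $A'=e'Ae'$ with $e'=1-e_n$ together with the induced grading $\mathbf{d}'$ defined before Lemma \ref{lem:res}. The first thing I would record is that the sum of arrow degrees is unchanged, $d'=d$: the sole modification is that the two arrows $x_{n-1},x_n$ are merged into one arrow of degree $\mathbf{d}(x_{n-1})+\mathbf{d}(x_n)$, leaving the total fixed. Lemma \ref{lem:det} then supplies $\det C^\mathbf{d}_A(t)=\det C^{\mathbf{d}'}_{A'}(t)$, and Lemma \ref{lem:res}(2,3) guarantees that $A'$ has the same weight $w$ and the same number of components $c$ as $A$. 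Applying the induction hypothesis to $A'$, which has order $n-1$, yields $\det C^{\mathbf{d}'}_{A'}(t)=(1-t^{d'w})^c/(1-t^{d'})=(1-t^{dw})^c/(1-t^d)$, closing the induction.

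The argument is in essence an assembly of the three preceding lemmas, so the substantive work has already been isolated; the only points demanding care are that the three data $d$, $w$, $c$ entering the formula are \emph{simultaneously} preserved under deleting a leaf — the first by direct inspection of $\mathbf{d}'$, the other two by Lemma \ref{lem:res} — and the matching of exponents in the base case with Lemma \ref{lem:self}(2). I anticipate no genuine obstacle beyond this bookkeeping: termination of the induction is ensured by the strict drop in order at each deletion, and the fact that an algebra with no leaf is selfinjective guarantees the recursion always reaches the base case.
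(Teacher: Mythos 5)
Your proposal is correct and follows essentially the same route as the paper: reduce to the selfinjective case by repeatedly deleting a leaf, using Lemma \ref{lem:det} for the determinant, Lemma \ref{lem:res} for the invariance of $w$ and $c$, the observation that the total degree $d$ is preserved, and Lemma \ref{lem:self} for the base case. Your version merely organizes the paper's ``repeat this process'' as a formal induction on the order.
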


\begin{proof}
If $A$ is selfinjective, then the conclusion follows from Lemma \ref{lem:self}.

Suppose that $A$ is not selfinjective.
There is a  Nakayama algebra $A'$ with grading $\mathbf{d'}$.
By Lemma \ref{lem:det} the determinants
of $C^\mathbf{d}_{A}$ and $C^{\mathbf{d}'}_{A'}$ are equal.

By Lemma \ref{lem:res} $R_A$ and $R_{A'}$ have the same weight and the number of cycles.
Note that $\mathbf{d}'$ and $\mathbf{d}$ have the same total degree.
Repeat this process, we get a selfinjective Nakayama algebra $B$ with grading $\mathbf{f}$.
Then
\[\det C^\mathbf{d}_{A}(t)=\det C^\mathbf{f}_{B}(t)=(1-t^{d\cdot w})^{c}/(1-t^d).\]
This finishes the proof.
\end{proof}

\begin{cor}
 Let $A$ be a Nakayama algebra of order $n$ with length grading.
 Then the determinant of the graded Cartan matrix $C_{A}(t)$ is
\[\det C_{A}(t)=(1-t^{nw})^{c}/(1-t^n)\]
\end{cor}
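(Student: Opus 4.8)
The plan is to obtain this corollary as a direct specialization of the preceding theorem, so the proof will be short and amount to substituting the length grading into the general formula. First I would observe that the length grading is the grading $\mathbf{d}$ with $\mathbf{d}(x_a)=1$ for every arrow $x_a$, $1\leq a\leq n$. The quantity $d$ appearing in the theorem is the sum of the degrees of all arrows; since there are exactly $n$ arrows in $\Delta_n$, each of degree $1$, I would compute $d=\sum_{a=1}^{n}\mathbf{d}(x_a)=n$.

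With $d=n$ in hand, the remaining step is purely to invoke the theorem. The general formula reads $\det C^\mathbf{d}_A(t)=(1-t^{dw})^{c}/(1-t^d)$, where $w$ is the weight of $R_A$ and $c$ is the number of components in $R_A$. Substituting $d=n$ yields
\[
\det C_A(t)=\frac{(1-t^{nw})^{c}}{1-t^n},
\]
which is exactly the claimed expression. Here I would note that $C_A(t)$ denotes the graded Cartan matrix for the length grading, consistent with the notational convention used earlier in the excerpt.

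I do not anticipate any genuine obstacle, since all the work has already been done in the theorem; the only thing to be careful about is the bookkeeping for $d$. The one point worth stating explicitly is that the number of arrows in $\Delta_n$ equals the order $n$ of the Nakayama algebra—this is visible from the defining quiver, where the arrows are $x_1,\dots,x_{n-1}$ together with the returning arrow $x_n$, giving $n$ arrows in total. Thus the length grading assigns total degree $n$, and the corollary follows immediately from the theorem by setting $d=n$.
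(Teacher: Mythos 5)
Your proof is correct and matches the paper's intent exactly: the corollary is stated without proof precisely because it is the immediate specialization of the preceding theorem to the length grading, where the total degree $d$ equals the number of arrows $n$. Nothing further is needed.
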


\section*{Acknowledgements}
The authors are supported by the National Natural Science Foundation of
China No. 11801141.
\

\end{document}